\DeclareMathSymbol\bbDelta  \mathord{bbold}{"01}
\def\X{\mathcal{X}}
\def\CC{\mathbb{C}}
\def\QQ{\mathbb{Q}}
\def\PP{\mathbb{P}}
\def\sm{\setminus}
\def\DD{\mathbb{D}}
\def\e{\epsilon}
\def\b{\bullet}
\def\ZZ{\mathbb{Z}}
\def\gr{\mathrm{Gr}}
\def\A{\mathcal{A}}
\def\ay{\mathbf{i}}
\def\H{\mathcal{H}}
\def\co{\mathcal{O}}
\def\SL{\mathrm{SL}}
\def\cs{\mathcal{S}}
\def\Y{\mathcal{Y}}
\def\uh{\underline{h}}
\def\DI{\Diamond}
\def\R{\mathcal{R}}
\def\x{\mathfrak{X}}
\def\XB{\overline{\X}}
\def\YB{\overline{\Y}}
\def\SB{\overline{\cs}}
\def\DD{\bbDelta}
\def\I{\mathrm{I}}
\def\II{\mathrm{II}}
\def\III{\mathrm{III}}
\def\IV{\mathrm{IV}}
\def\uht{\underline{\mathtt{h}}}
\def\ua{\underline{a}}
\def\uA{\underline{A}}
\def\ub{\underline{b}}
\def\uc{\underline{c}}
\def\fy{\mathfrak{Y}}
\theoremstyle{plain}
\newtheorem{thm}{Theorem}
\newtheorem{cor}[thm]{Corollary}
\newtheorem{prop}[thm]{Proposition}
\newtheorem{conj}[thm]{Conjecture}
\theoremstyle{definition}
\newtheorem{rem}[thm]{Remark}
\newtheorem{defn}[thm]{Definition}
\theoremstyle{definition}
\newtheorem{exa}[thm]{Example}
\newtheorem*{thx}{Acknowledgments}
\numberwithin{equation}{section}
\author{RJ Acu\~na}
\address{Department of Mathematics, Washington University in St.~Louis, 1 Brookings Drive, St.~Louis, MO 63130, USA}
\email{rjacuna@wustl.edu}
\author{Matt Kerr}
\address{Department of Mathematics, Washington University in St.~Louis, 1 Brookings Drive, St.~Louis, MO 63130, USA}
\email{matkerr@wustl.edu}
\begin{document}

\title{Hodge adjacency conditions for singularities}

\begin{abstract}
We prove compatibility relations between mixed Hodge numbers of $k$-Du Bois fibers in flat projective families and versal deformations of isolated $k$-Du Bois singularities.  These extend the notion of polarized relations in asymptotic Hodge theory beyond the normal-crossing boundary case, and we study combinatorial properties of the resulting weak polarized relations graphs.
\end{abstract}

\dedicatory{for Wilfried Schmid, who led the way in asymptotic Hodge theory}

\maketitle

In this note we introduce a theory of \emph{weak polarized relations} governing which other strata of a discriminant locus can be in the closure of a given stratum.  This theory applies to both families of varieties (e.g., over MMP-type compactifications of a moduli space) and deformations of singularities (initially just isolated ones).  The idea is that under an assumption that singularity types have something in common (both slc, or both $k$-log-canonical for some $k$), there should be a relation derived from Hodge-Deligne numbers or mixed spectra that governs whether one type can specialize to another.  

Here is a brief overview.  We first prove Theorem \ref{T1par} on 1-parameter degenerations $\mathfrak{X}\to \Delta$ whose singular fiber $X_0$ has $k$-Du Bois singularities, of lci type if $k>0$. It states that for all $r$, the specialization map $$H^r(X_0)\to H^r_{\lim}(X_t)$$
(always a morphism of mixed Hodge structure) is a bigraded isomorphism modulo $F^{k+1}$, which may be viewed as a ``common refinement'' of \cite[Thm.~1.6]{KL3} and \cite[Cor.~1.4]{FL}.  

Since the total space is \emph{not} assumed smooth in Theorem \ref{T1par}, it can then be used to ``probe'' the boundary in multiparameter degenerations.  This leads to our main Theorem \ref{T3}, which compares the mixed Hodge numbers of general fibers $X_0,X_1$ over adjacent strata $\Sigma_0,\Sigma_1$ in the discriminant locus of a flat projective family $\overline{\mathcal{X}}\to \overline{\mathcal{S}}$.  Specifically, if $\Sigma_0\subset \overline{\Sigma}_1$, and $X_0$ has $k$-Du Bois singularities (again of lci type if $k>0$), then 
\begin{equation}\label{eqI}
H^r(X_1)\preceq H^r(X_0)\;\;\;\;\text{mod}\;F^{k+1}\;(\forall r).
\end{equation}
Here ``$\preceq$'' is the notion of \emph{polarized relation} introduced in \cite{KPR}; it concerns only the Hodge-Deligne numbers $h^{p,q}(H)$ of a mixed Hodge structure $H$.  By definition, $H'\preceq H''$ means that there exist variations of Hodge structure over a product of punctured disks $\Delta^*\times\Delta^*$, with limit of type $H'$ along one axis and limit of type $H''$ at the center.  The \emph{weak} polarized relation \eqref{eqI} is saying that, despite the non-normal-crossing nature of the discriminant locus, the same relation governs the Hodge-Deligne numbers in $\gr_F^i$ for $0\leq i\leq k$.  An analogous statement relating the vanishing cohomology mixed Hodge structures of ``adjacent'' isolated $k$-Du Bois singularities is given in Corollary \ref{C6}.

By \cite[\S5]{KPR}, the polarized relations have a simple combinatorial description which is recalled at the beginning of \S\ref{S2}.  We use this to work out the weak polarized relations in the 0-Du Bois setting in varying levels of detail:  for weight $2$, a complete description in Proposition \ref{P10}; for weight $3$, a conjectural description; and finally, a general result for arbitrary weight in Proposition \ref{p19}.  This says that if the Hodge numbers beat a specific lower bound, then the weak polarized relations are given by the simplest inequalities possible and constitute a poset.  The lower bound is satisfied for projective hypersurfaces and double covers, which is reflected in the application to moduli of Horikawa surfaces in Example \ref{Ex5}.

Note that by \cite{KK}, the $0$-Du Bois case in this paper includes the semi-log-canonical singular fibers appearing in MMP-style compactifications.  For moduli of surfaces, we have seen a number of preprints taking for granted that polarized relations constrain the specializations of singular fibers within the discriminant locus.  (This is far from obvious since that locus is not a normal crossing divisor and the total space may be singular.)  Theorem \ref{T3} makes this rigorous and generalizes it to higher dimensions and singularity types.

\begin{thx}
We thank Mircea Musta\c t\u a for helpful correspondence and AIM for ideal collaborative working conditions during the workshop ``$k$-Du Bois and $k$-rational singularities''.  We also thank Radu Laza for a discussion and Sung Gi Park for pointing out a gap in the original proof of Theorem \ref{T1par}. The first author acknowledges the financial support of the Ann W.~and Spencer T.~Olin Chancellor's Graduate Fellowship.  The second author acknowledges support from NSF Grant DMS-2502708 and from the Simons Foundation.
\end{thx}

\section{Weak polarized relations}

Given a variation of Hodge structure over $(\Delta^*)^2$ with unipotent monodromies, questions about the relationship between its  limits on different strata of the ``discriminant locus'' arise:
\begin{itemize}
\item [(i)] What pairs of LMHS-types are possible on a coordinate axis and the origin (the notion of ``polarized relation'')?
\item [(ii)] What triples of LMHS-types are possible on the two coordinate axes together with the origin (the notion of ``polarizable triples'')?
\end{itemize}
These were the subject of a 2019 paper of Kerr, Pearlstein and Robles \cite{KPR}.  In the presence of a Mumford-Tate group condition, they showed that (i) is interesting and reduces to the identification of multivariable $\SL_2$-orbits (which can be done Lie-theoretically).  In the absence of a Mumford-Tate condition, (i) becomes relatively straightforward while (ii) remains a deep question in which Lie theory is unavoidable.

The goal here is to formulate Hodge-theoretic adjacency conditions for the much more general kinds of degenerations that arise, for example, in compactifications of moduli and miniversal deformations of singularities.  These conditions will of course be weaker than the ones in [op.~cit.], and merely necessary (not sufficient).  In particular, since the discriminant loci that arise are not locally normal-crossing, the results of [op.~cit.] do not apply --- the local monodromy is not even abelian, let alone described by a nilpotent cone.

For a projective variety $X$ with hyper-resolution $\e\colon \tilde{X}_{\b}\to X$, define $$\tilde{\Omega}^p_X:=\gr_F^p(R\e_*\Omega_{\tilde{X}_{\b}}^{\b})[p]$$ and consider the composition
\begin{equation*}
\Omega_X^p\overset{\phi^p}{\to}\tilde{\Omega}_X^p\overset{\psi^p}{\to}\DD_X(\tilde{\Omega}^{n-p}_X).
\end{equation*}
One says that $X$ is $k$-Du Bois [resp.~$k$-rational] if $\phi^p$ [resp.~$\psi^p\circ \phi^p$] is a quasi-isomorphism for $0\leq p\leq k$ \cite{FL}.  In many cases it is known by [op.~cit.] that $k$-rational implies $k$-Du Bois, and $k$-Du Bois implies $(k-1)$-rational.

\subsection{1-parameter degenerations}

Now consider a flat, projective degeneration $\x\to \Delta$ with reduced special fiber and $\pi\colon \x^* \to \Delta^*$ smooth (where $\x^*:=\x\sm X_0$).  We assume that $\x$ embeds as an open analytic subset of some projective algebraic $\XB$, with a flat projective morphism to a smooth curve.\footnote{Without loss of generality, $\XB\sm X_0$ can be assumed smooth, so that $\XB$ has the same singularity types as $\x$.}  Kerr and Laza had speculated that a generalization of the ``inversion of adjunction'' result of Schwede \cite{Sc} should hold, to the effect that $X_0$ $k$-Du Bois implies $\x$ has $k$-rational singularities.  The aim was to show the following generalization of \cite[Thm. 9.3]{KL1}, extending \cite[Thm.~1.6]{KL3} to non-smooth total spaces:

\begin{conj}\label{pKL1}
If $X_0$ has $k$-Du Bois singularities, then $(H^r(X_0))^{p,q}\cong (H^r_{\lim}(X_t))^{p,q}$ \textup{(}with trivial $T^{\text{ss}}$-action on the RHS\textup{)} for $0\leq p\leq k$ and all $q,r$.
\end{conj}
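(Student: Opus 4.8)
The plan is to reformulate the statement as a vanishing statement for the low Hodge-graded pieces of the vanishing-cycle complex of $f\colon\x\to\Delta$, and then to obtain that vanishing by showing that the (possibly singular) total space $\x$ inherits $k$-rational singularities from the $k$-Du Bois fiber $X_0$; once that is known, the nearby-cycle computation of \cite[Thm.~9.3]{KL1}, \cite[Thm.~1.6]{KL3} can be run at Hodge level $\leq k$. Concretely, I would work with Saito's mixed Hodge modules on $\x$. Writing $i\colon X_0\hookrightarrow\x$ and, using that $X_0$ is reduced, $\phi_{f,1}\QQ^H_\x:=\mathrm{Cone}(i^*\QQ^H_\x\to\psi_{f,1}\QQ^H_\x)$ for the unipotent vanishing cycles, the specialization morphism $H^r(X_0)\to H^r_{\lim}(X_t)$ is induced by $i^*\QQ^H_\x\to\psi_{f,1}\QQ^H_\x$ upon applying $\HH^r(X_0,-)$, whose target is exactly the $T^{\text{ss}}$-fixed part of $H^r_{\lim}(X_t)$. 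By the $E_1$-degeneration of the Hodge spectral sequence for mixed Hodge modules on the projective variety $X_0$ (so that $\gr_F$ of hypercohomology is the hypercohomology of $\gr_F\,\mathrm{DR}$), together with the long exact sequence of the triangle and strictness of the filtrations $F$ and $W$, the assertion is equivalent to
\[
\HH^r\!\bigl(X_0,\,\gr^p_F\,\mathrm{DR}(\phi_{f,1}\QQ^H_\x)\bigr)=0\qquad(\forall\,r,\ 0\leq p\leq k),
\]
and the natural target is the stronger local statement $\gr^p_F\,\mathrm{DR}(\phi_{f,1}\QQ^H_\x)\simeq 0$ for $p\leq k$. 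Since $i^*\QQ^H_\x=\QQ^H_{X_0}$, the pieces $\gr^p_F\,\mathrm{DR}(i^*\QQ^H_\x)$ are, up to shift, the graded Du Bois sheaves $\tilde{\Omega}^p_{X_0}$, and the $k$-Du Bois hypothesis on $X_0$ (i.e.\ $\phi^p$ a quasi-isomorphism for $p\leq k$) identifies these with the ordinary $\Omega^p_{X_0}$; so the content is the quasi-isomorphism of $\Omega^p_{X_0}$ with the degree-$p$ graded piece of the nearby-cycle filtered de Rham complex, for $p\leq k$.

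The crux is the local step. Since $X_0=f^{-1}(0)$ is a reduced Cartier divisor in $\x$, is lci when $k>0$, and $\x\sm X_0$ is smooth, the total space $\x$ is itself lci, and I claim the following higher form of Schwede's inversion of adjunction \cite{Sc}: if $X_0$ is $k$-Du Bois then $\x$ is $k$-rational in a neighbourhood of $X_0$. For $k=0$ it is enough that $\x$ be Du Bois near $X_0$, which is Schwede's theorem. For $k\geq 1$, I would embed $X_0\subset\x$ in a smooth ambient and argue with minimal exponents: the lci characterizations express ``$X_0$ is $k$-Du Bois'' as a bound $\tilde{\alpha}_{X_0}\geq k+1$ and ``$\x$ is $k$-rational'' as the \emph{strict} bound $\tilde{\alpha}_\x>k+1$; the inequality $\tilde{\alpha}_\x\geq\tilde{\alpha}_{X_0}$ (restriction to the divisor $\{t=0\}$ only lowers the minimal exponent) already shows that $\x$ is $k$-Du Bois, and the remaining task — promoting $\geq$ to $>$ — is where the smoothness of $\x\sm X_0$ (equivalently, $\mathrm{Sing}\,\x\subseteq X_0$ and the critical scheme of $f$ supported on $X_0$) has to be used essentially, e.g.\ via a local comparison of Bernstein--Sato polynomials in the $t$-direction. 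This strict gain is precisely the generalization of inversion of adjunction that Kerr and Laza had speculated, and it is the main obstacle in the whole argument; the rest is bookkeeping around \cite{KL3}.

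Granting it, the final step runs the Steenbrink-type computation. Once $\x$ is $k$-rational near $X_0$, one has $\gr^p_F\,\mathrm{DR}(\QQ^H_\x)\simeq\Omega^p_\x$ for $p\leq k$ and, in that range, the self-duality $\tilde{\Omega}^p_\x\simeq\DD_\x(\tilde{\Omega}^{\dim\x-p}_\x)$ (the quasi-isomorphism $\psi^p\circ\phi^p$) — these are exactly the inputs used in \cite[Thm.~1.6]{KL3} for a smooth total space. Feeding them into the residue/weight spectral sequence computing $\gr^p_F$ of $\psi_{f,1}\QQ^H_\x$, now with the singular but $k$-rational $\x$ in place of a smooth total space and only for $p\leq k$, gives $\gr^p_F\,\mathrm{DR}(\psi_{f,1}\QQ^H_\x)\simeq\tilde{\Omega}^p_{X_0}\simeq\Omega^p_{X_0}$ compatibly with the comparison map, hence $\gr^p_F\,\mathrm{DR}(\phi_{f,1}\QQ^H_\x)\simeq 0$ for $p\leq k$. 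By the first step this says that $H^r(X_0)\to H^r_{\lim}(X_t)$ is a bigraded isomorphism onto the $T^{\text{ss}}$-fixed part modulo $F^{k+1}$, which is the claim. (Alternatively the last step can be organized as a descending induction on the Hodge level, using ``$k$-Du Bois $\Rightarrow(k-1)$-rational'' to strip off one graded piece at a time.)
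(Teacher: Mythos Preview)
Your overall strategy---reduce everything to ``inversion of adjunction'' (i.e.\ $X_0$ $k$-Du Bois $\Rightarrow$ $\x$ $k$-rational) and then conclude from $k$-rationality of the total space---is exactly the paper's.  The inversion-of-adjunction step you sketch via minimal exponents is now a theorem of Chen \cite[Thm.~1.2]{Ch} (in the lci case), and the paper simply invokes it; you need not re-derive the strict inequality $\tilde{\alpha}_{\x}>k+1$ by hand.

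Where you and the paper diverge is in the \emph{final step}.  You try to run a local Steenbrink-type computation directly on the singular-but-$k$-rational $\x$ to get $\gr^p_F\mathrm{DR}(\phi_{f,1}\QQ^H_\x)\simeq 0$ for $p\le k$.  The paper instead works globally: it takes a log resolution $\beta\colon\YB\to\XB$, uses $k$-rationality of $\XB$ only via \cite[Thm.~3.24]{FL} to get $\gr_F^p H^r(\XB)\hookrightarrow\gr_F^p H^r(\YB)$ for $p\le k$, combines this with a Mayer--Vietoris sequence to deduce that $\gr_F^p\mathtt{sp}^r_{\x}$ is \emph{injective}, and then matches ranks using \cite[Cor.~1.4]{FL} to conclude it is an isomorphism.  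This sidesteps entirely the need to redo nearby/vanishing-cycle analysis on a singular total space, where the residue/weight spectral sequence you invoke is not standard and would require genuine justification.

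There is also a gap in scope.  Your argument only treats $\psi_{f,1}$, so at best you obtain that $\mathtt{sp}$ is a bigraded isomorphism onto the $T^{\text{ss}}$-fixed part mod $F^{k+1}$.  The statement asserts more: that $T^{\text{ss}}$ acts \emph{trivially} on all of $(H^r_{\lim})^{p,q}$ for $p\le k$ (equivalently $\gr_F^p\psi_{f,\neq 1}=0$).  In the paper's argument this falls out for free: since $\mathtt{sp}$ factors through $(H^r_{\lim})^T\subset (H^r_{\lim})^{T_{\text{ss}}}$ and $\gr_F^p\mathtt{sp}^r$ is surjective onto the full $\gr_F^p H^r_{\lim}$ (by the rank count), the non-unipotent part must vanish in those graded pieces.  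Your local approach would need a separate argument for $\psi_{f,\neq 1}$.
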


Inversion of adjunction has now been proved by Chen \cite[Thm.~1.2]{Ch} in the case where $\x$ has local complete intersection singularities,\footnote{It appears to be an open question whether this should generalize beyond the lci setting.} which allows us to prove this conjecture below for the lci case (Theorem \ref{T1par}).  To grease the skids, we review a few basic principles from \cite{KL1,KL3}.  Write $T=T_{\text{ss}}e^N$ for the monodromy of the local system $R^r\pi_*\ZZ_{\x^*}$, which underlies a VHS $\H=\H^r$ with Hodge numbers $\uh=(h^{p,r-p}(X_t))$.  The vanishing-cycle triangle for the constant mixed Hodge module $\QQ_{\X}^H$ induces an exact sequence of MHS
\begin{equation*}
\to H^r(X_0)\overset{\mathtt{sp}}{\to} H^r_{\lim}(X_t)\overset{\mathtt{can}}{\to}H^r_{\text{van}}(X_t)\overset{\delta}{\to}H^{r+1}(X_0)\overset{\mathtt{sp}}{\to}H^{r+1}_{\lim}(X_t)\to 
\end{equation*}
intertwining the action of $T_{\text{ss}}$ (by morphisms of MHS) and $N$ (by $(-1,-1)$ morphisms).  Both actions are trivial on $H^*(X_0)$.

By work of Schmid \cite{Sd}, the $\{h^{p,q}(H^r_{\lim})\}$ yield an admissible Hodge diamond in the sense of \S\ref{S2}.  Moreover, if we denote by $\tt{E}$ the semisimple operator acting on $(H^r_{\lim})^{p,q}$ by $p-q$, then the action of $N$ on $H^r_{\lim}$ extends to a representation of $\mathfrak{sl}_2$ intertwining $T_{\text{ss}}$ and $\mathtt{E}$.  An irreducible component (lying in a single eigenspace for both $T_{\text{ss}}$ and $\mathtt{E}$) of this representation, which takes the form of a direct sum of $\ell+1$ one-dimensional subspaces in $(H^r_{\lim})^{p_0+i,r-p_0-\ell+i}$ for $i=0,\ldots,\ell$, is called an \emph{$N$-string} of length $\ell$.  (We can also talk about $N$-strings in $H^r_{\text{van}}$, but they need not be centered about weight $p+q=r$.)  Since $T$ acts trivially on $H^*(X_0)$, the specialization map $\mathtt{sp}$ factors through $(H^r_{\lim})^T=\ker(N)\subset (H^r_{\lim})^{T_{\text{ss}}}$.

Now if $\x$ is smooth (or a rational homology manifold), $\QQ_{\x}^H$ is semisimple and the decomposition theorem applies; hence $\mathrm{im}(\mathtt{sp})=(H^r_{\lim})^T$, and $\mathrm{im}(\delta)$ is pure of weight $r+1$ and level $\leq r-1$ (cf.~\cite[Thm.~5.3, Prop.~5.5]{KL1}).  Denoting eigenvalues of $T_{\text{ss}}$ by $\xi$, an easy argument shows that the $N$-strings of $H^r_{\text{van}}$ are therefore:
\begin{itemize}[leftmargin=0.5cm]
\item (for $\xi\neq 1$) centered about weight $r$, and in $\ker(\delta)$; or
\item (for $\xi=1$) centered about weight $r+1$, and in $\ker(\delta)$ for length $>0$.
\end{itemize}
A first entry-point for $k$-Du Bois singularities is then given by:

\begin{prop}[{\cite[Thm.~1.6]{KL3}}]
If $\x$ is smooth, with special fiber $X_0$ $k$-log-canonical,\footnote{Since $X_0$ has hypersurface singularities, $k$-log-canonical and $k$-Du Bois are equivalent by \cite{JKSY}.} then $H^*_{\mathrm{van}}$ belongs to $F^{k+1}$.  Hence
\begin{itemize}[leftmargin=0.5cm]
\item the $N$-strings of $H^r_{\lim}$ of length $>0$ are in $F^{k+1}$ for $\xi\neq 1$ and in $F^k$ for $\xi=1$; and
\item $\mathtt{sp}^r\colon H^r(X_0)\to H^r_{\lim}(X_t)$ is an isomorphism mod $F^{k+1}$ and pure of weight $r$ mod $F^k$.
\end{itemize}
\end{prop}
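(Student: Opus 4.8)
The plan is to deduce the statement from the structure of the vanishing-cycle sequence together with the self-duality coming from the $k$-Du Bois (equivalently, $k$-log-canonical) hypothesis on $X_0$. The key input is the identification of $\tilde{\Omega}^p_{X_0}$ with $\Omega^p_{X_0}$ for $0 \le p \le k$, which for a smooth total space $\x$ lets us compare $R\e_*\Omega^{\b}_{\tilde{X}_{0,\b}}$ against the relative de Rham complex. First I would recall that for smooth $\x$, the Steenbrink–Zucker / Katz–Ogus style computation of $F^{\b} H^r_{\lim}(X_t)$ identifies the pieces $\gr_F^i H^r_{\lim}$ for $i \le k$ with the hypercohomology of truncated pieces of the relative complex $\Omega^{\b}_{\x/\Delta}(\log X_0)$, and that the $k$-log-canonical condition forces these to agree with $\gr_F^i H^r(X_0)$. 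In other words, the $k$-Du Bois condition on $X_0$ kills the ``log'' correction terms in degrees $\le k$, which is exactly the statement that $H^*_{\mathrm{van}}$ has no Hodge pieces in $\gr_F^i$ for $i \le k$, i.e. $H^*_{\mathrm{van}} \subset F^{k+1}$.

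Next, feeding $H^*_{\mathrm{van}} \subset F^{k+1}$ back into the exact sequence: the map $\mathtt{can}\colon H^r_{\lim} \to H^r_{\mathrm{van}}$ is a morphism of MHS, so its image lies in $F^{k+1} H^r_{\mathrm{van}}$; since $\mathtt{can}$ is (up to the $N$-action) surjective onto the $\xi = 1$, nontrivial-$N$-string part plus the $\xi \ne 1$ part, this says precisely that the nonconstant $N$-strings of $H^r_{\lim}$ live in $F^{k+1}$ for $\xi \ne 1$ and, since a length-$\ell$ string centered at weight $r+1$ contributes Hodge pieces symmetrically about the center, in $F^k$ for $\xi = 1$ — the half-integer shift in centering is what produces the $F^k$ versus $F^{k+1}$ discrepancy. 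Then, using $\mathrm{im}(\mathtt{sp}) = (H^r_{\lim})^T = \ker(N)$ (valid since $\x$ is smooth, by the decomposition theorem as recalled in the excerpt) together with $\mathrm{im}(\delta)$ pure of weight $r+1$ and level $\le r-1$, one checks that $\mathtt{sp}^r$ is injective with cokernel supported on $N$-strings of length $>0$; combining with the $F$-level statement just established gives that $\mathtt{sp}^r$ is an isomorphism modulo $F^{k+1}$, and that modulo $F^k$ the target $(H^r_{\lim})^T$ is pure of weight $r$ (the weight-$r+1$ contributions from $\xi=1$ strings being confined to $F^k$).

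The main obstacle I expect is the first step: rigorously establishing $H^*_{\mathrm{van}} \subset F^{k+1}$ from the $k$-Du Bois hypothesis on $X_0$. This requires carefully relating the Du Bois complex $\underline{\Omega}^{\b}_{X_0}$ (and its $k$-truncation) to the vanishing cycle complex $\phi_f \QQ^H_{\x}$, presumably via the comparison of $\gr_F$ of the nearby/vanishing cycle mixed Hodge module with the relative logarithmic de Rham complex, and then invoking the characterization of $k$-log-canonicity in terms of $\gr_F^p R\e_*\Omega^{\b} \simeq \Omega^p_{X_0}$ for $p \le k$. One must also take care that the cited equivalence of $k$-log-canonical and $k$-Du Bois (via \cite{JKSY}) genuinely applies, i.e. that $X_0$ indeed has hypersurface singularities in the ambient smooth $\x$ — which it does, being a principal divisor. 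Everything downstream (the $N$-string bookkeeping and the diagram chase in the exact sequence) is then routine linear algebra of mixed Hodge structures.
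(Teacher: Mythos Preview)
The paper does not actually prove this proposition: it is quoted verbatim as \cite[Thm.~1.6]{KL3} and no argument is supplied in the text.  So there is no ``paper's own proof'' to compare against; the authors simply import the result and move on to the non-smooth case (Theorem~\ref{T1par}).

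That said, your sketch is broadly on the right track and consonant with how the cited result is established.  The core input---that the $k$-Du Bois condition on $X_0$ forces $\gr_F^p H^*_{\mathrm{van}}=0$ for $p\le k$---is exactly what \cite{KL3} proves via the mixed-Hodge-module comparison you allude to, and your downstream deductions from the vanishing-cycle sequence are correct.  One small muddle: when you explain the $F^k$ bound for $\xi=1$ strings you invoke ``strings centered at weight $r+1$'', but those live in $H^r_{\mathrm{van}}$, not $H^r_{\lim}$.  The clean argument is that for $\xi=1$ the cokernel of $\mathtt{sp}$ in $H^r_{\lim}$ is $H^r_{\lim,\xi=1}/\ker(N)$, which injects into $H^r_{\mathrm{van}}\subset F^{k+1}$; for a length-$\ell>0$ string $S$ in $H^r_{\lim,\xi=1}$ this kills only the bottom piece, so the second-lowest piece of $S$ already lies in $F^{k+1}$, hence the bottom piece lies in $F^k$ and $S\subset F^k$.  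Also, you do not need the decomposition theorem to get injectivity of $\mathtt{sp}$ modulo $F^{k+1}$: it follows directly from strictness and $H^{r-1}_{\mathrm{van}}\subset F^{k+1}$, since $\ker(\mathtt{sp}^r)=\mathrm{im}(\delta^{r-1})$.

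It is worth noting that the paper's proof of the \emph{next} result (Theorem~\ref{T1par}, the non-smooth generalization) takes a rather different route---inversion of adjunction plus a log-resolution diagram and the rank equality from \cite[Cor.~1.4]{FL}---rather than the direct vanishing-cycle/Du Bois comparison you outline here.  That approach is forced by the loss of smoothness of $\x$; your sketch is the natural one when $\x$ is smooth.
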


In the case where $\x$ is not assumed smooth, but singularities of $X_0$ are at least of local complete intersection type, we now have the following:

\begin{thm}\label{T1par}
Suppose $X_0$ has $k$-Du Bois singularities, of lci type if $k>0$.\footnote{The $k=0$ case (with no lci assumption) is \cite[Thm.~9.3]{KL1}.}  Then $\mathtt{sp}^r$ is an isomorphism mod $F^{k+1}$ and pure of weight $r$ mod $F^k$.  In particular, $h^{p,q}(H^r(X_0))=h^{p,q}(H^r_{\lim})$ $\forall p\leq k$, with both zero when $p<k$ and $q\neq r-p$.
\end{thm}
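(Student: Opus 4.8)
The plan is to reduce Theorem~\ref{T1par} to the single assertion $H^{*}_{\mathrm{van}}(X_{t})\subset F^{k+1}$ and to extract the latter from the $k$-Du Bois hypothesis by way of Chen's inversion of adjunction. First I would dispose of the case $k=0$ by \cite[Thm.~9.3]{KL1}, so that we may assume $k\ge1$; then $X_{0}$ is lci, and since $X_{0}=\pi^{-1}(0)$ is a reduced Cartier divisor in $\x$, a standard cotangent-complex argument shows $\x$ is lci in a neighbourhood of $X_{0}$. Hence \cite[Thm.~1.2]{Ch} applies and $\x$ has $k$-rational --- in particular $k$-Du Bois --- singularities along $X_{0}$.

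The reduction itself is formal. Granting $H^{*}_{\mathrm{van}}\subset F^{k+1}$: the cokernel of $\mathtt{sp}^{r}$ embeds into $H^{r}_{\mathrm{van}}$ and its kernel is a quotient of $H^{r-1}_{\mathrm{van}}$, so --- $\mathtt{can}$ and $\delta$ being morphisms of MHS, hence strict --- both sit in $F^{k+1}$ and $\gr^{p}_{F}\mathtt{sp}^{r}$ is an isomorphism for $p\le k$; this is the isomorphism mod $F^{k+1}$, and it gives $h^{p,q}(H^{r}(X_{0}))=h^{p,q}(H^{r}_{\lim})$ for $p\le k$. For the weight assertion, recall that $\ker(\mathtt{can})=\mathrm{im}(\mathtt{sp})$ factors through $\ker(N)\cap(H^{r}_{\lim})^{T_{\mathrm{ss}}}$; hence $\mathtt{can}$ is injective on every non-bottom member of an $N$-string of $H^{r}_{\lim}$ and on the whole $\xi\ne1$ part, and strictness together with $H^{*}_{\mathrm{van}}\subset F^{k+1}$ forces all of these into $F^{k+1}$. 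Therefore the only Hodge pieces of $H^{r}_{\lim}$ of level $<k$ are the weight-$r$ members of the length-$0$ $N$-strings with $\xi=1$, so $\gr^{p}_{F}H^{r}_{\lim}$ is pure of weight $r$ for $p<k$; transporting this back through the isomorphism mod $F^{k+1}$ gives ``pure of weight $r$ mod $F^{k}$'' and $h^{p,q}=0$ for $p<k$, $q\ne r-p$.

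It remains to prove $H^{*}_{\mathrm{van}}(X_{t})\subset F^{k+1}$, for which it suffices that $\gr^{-p}_{F}\mathrm{DR}(\phi_{\pi}\mathbb{Q}^{H}_{\x})$ vanish for $0\le p\le k$. The approach is to apply $\gr^{-p}_{F}\mathrm{DR}$ to the ``can'' triangle $\mathbb{Q}^{H}_{X_{0}}\to\psi_{\pi,1}\mathbb{Q}^{H}_{\x}\to\phi_{\pi,1}\mathbb{Q}^{H}_{\x}\xrightarrow{+1}$: the first term becomes $\tilde{\Omega}^{p}_{X_{0}}$, which for $p\le k$ is $\Omega^{p}_{X_{0}}$ by $k$-Du Bois, so one is left to show that the induced map $\Omega^{p}_{X_{0}}\to\gr^{-p}_{F}\mathrm{DR}(\psi_{\pi,1}\mathbb{Q}^{H}_{\x})$ is a quasi-isomorphism for $p\le k$. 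This is where $k$-rationality of $\x$ is meant to enter: on the graded pieces $\gr^{-p}_{F}$ with $p\le k$ it should allow replacing $\mathbb{Q}^{H}_{\x}$ by a self-dual, semisimple model --- e.g.\ the pushforward from a semistable resolution --- for which $\gr_{F}$ of the nearby cycles is the classical relative logarithmic de Rham complex, reducing the claim to the normal-crossing computation underlying \cite{KL3}. The non-unipotent part $\phi_{\pi,\ne1}\cong\psi_{\pi,\ne1}$ (which carries no fibre term) is handled in the same spirit, using the spectral characterisation of $k$-Du Bois singularities to control the Hodge filtration on the \emph{full} reduced vanishing cohomology, not just its unipotent summand.

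I expect this last step to be the main obstacle. At a singular point of $\x$ the fibre $X_{0}$ is no longer a hypersurface in a smooth manifold, so the classical identification of $\gr_{F}$ of nearby cycles that powers the smooth case \cite[Thm.~1.6]{KL3} is unavailable, and building its $k$-rational surrogate --- showing that the total-space singularities contribute nothing to $\gr^{\le k}_{F}$, uniformly in $p$, which is precisely the point where Chen's theorem rather than merely Schwede's is required --- is the heart of the matter. The two reductions surrounding it are a filtered reprise of \cite{KL1,KL3}.
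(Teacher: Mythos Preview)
Your reduction is sound: the assertion $H^{*}_{\mathrm{van}}(X_t)\subset F^{k+1}$ is equivalent to ``$\mathtt{sp}^r$ is an isomorphism mod $F^{k+1}$ for all $r$'' by strictness in the vanishing-cycle sequence, and your derivation of the semi-purity statement from it (via $N$-strings and the factorization of $\mathtt{sp}$ through $(H^r_{\lim})^T$) is essentially the same as the paper's.  The invocation of \cite{Av} and \cite{Ch} to pass from $X_0$ $k$-Du Bois lci to $\x$ $k$-rational lci is also what the paper does.

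The gap is exactly where you flag it: you do not prove $H^{*}_{\mathrm{van}}\subset F^{k+1}$.  The sketch you give --- analyzing $\gr_F^{-p}\mathrm{DR}$ of the $\phi$/$\psi$ triangle and hoping that $k$-rationality of $\x$ lets one replace $\QQ^H_{\x}$ by a ``self-dual semisimple model'' on $\gr_F^{\le k}$ --- is not a proof, and you acknowledge as much.  There is no available statement that produces such a model at the sheaf level; the decomposition theorem requires a smooth (or rational-homology-manifold) total space, and $k$-rationality alone does not give this.

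The paper sidesteps this sheaf-level computation entirely by a global rank argument.  It takes a log resolution $\beta\colon\YB\to\XB$ with $Y_0=\beta^{-1}(X_0)$ SNC, so that the smooth-total-space result applies to $\YB$ and gives $\gr_F^p\mathtt{sp}^r_{\YB}\colon \gr_F^p H^r(Y_0)\overset{\sim}{\to}\gr_F^p(H^r_{\lim})^T$.  The $k$-rationality of $\XB$ is then used \emph{only} via the cohomological injectivity $\gr_F^p H^r(\XB)\hookrightarrow \gr_F^p H^r(\YB)$ for $p\le k$ (from the proof of \cite[Thm.~3.24]{FL}), which together with the Mayer--Vietoris sequence for the resolution square forces $\beta_0^*\colon \gr_F^p H^r(X_0)\to \gr_F^p H^r(Y_0)$ to be injective.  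Composing, $\gr_F^p\mathtt{sp}_{\x}^r$ is injective for $p\le k$.  Finally, the equality of ranks $\dim\gr_F^p H^r(X_0)=\dim\gr_F^p H^r(X_t)$ from \cite[Cor.~1.4]{FL} upgrades the injection to an isomorphism.  So rather than proving the stronger local statement $\gr_F^{-p}\mathrm{DR}(\phi_{\pi}\QQ^H_{\x})=0$, the paper combines injectivity (from $k$-rationality via \cite{FL}) with a pre-existing rank equality (from $k$-Du Bois via \cite{FL}) --- two ingredients you do not invoke.
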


An immediate consequence is that the Hodge-Deligne diagram for $H^r_{\lim}$ takes the form shown below, with the $\xi\neq 1$ part confined to the red square, and anything outside the red square determined by $H^r(X_0)$.

\[\begin{tikzpicture}[scale=1.3,decoration={zigzag,amplitude=0.5pt,segment length=2mm}]
\draw [gray,thick,<->] (0,3.3) -- (0,0) -- (3.3,0);
\node at (3.5,0) {$p$};
\node at (-0.3,3.2) {$q$};
\draw [gray,thick] (0.7,-0.1) -- (0.7,0.1);
\draw [gray,thick] (2.3,-0.1) -- (2.3,0.1);
\draw [gray,thick] (3,-0.1) -- (3,0.1);
\node at (3,-0.3) {$r$};
\draw [red,thick] (0.9,-0.1) -- (0.9,0.1);
\draw [red,thick] (2.1,-0.1) -- (2.1,0.1);
\node at (0.7,-0.3) {$k$};
\node at (2.3,-0.3) {$r{-}k$};
\node [red] at (0.9,0.3) {\tiny $k{+}1$};
\node [red] at (2.1,0.3) {\tiny $r{-}k{-}1$};
\draw [black,very thick,decorate] (0,3) -- (0.7,2.3);
\draw [black,very thick,decorate] (3,0) -- (2.3,0.7);
\draw [black,very thick] (0.7,0.7) -- (2.3,0.7) -- (2.3,2.3) -- (0.7,2.3) -- (0.7,0.7);
\node [blue] at (0.3,1.5) {$0$};
\node [blue] at (2.7,1.5) {$0$};
\node [blue] at (1.5,2.7) {$0$};
\node [blue] at (1.5,0.4) {$0$};
\draw [red,very thick,decorate,fill=red!20] (0.9,0.9) -- (0.9,2.1) -- (2.1,2.1) -- (2.1,0.9) -- (0.9,0.9);
\end{tikzpicture}\]

\begin{proof}[Proof of Theorem \ref{T1par}]
By assumption, $\x\to\Delta$ extends to $\XB\overset{\overline{\pi}}{\to} \SB$ flat projective algebraic, with $\cs$ smooth and $\XB\sm X_0$ smooth, and $X_0$ has lci singularities.  By \cite[Cor. 2]{Av}, it follows that $\XB$ also has lci singularities, and applying \cite[Thm.~1.2]{Ch} now gives that $\XB$ has \emph{$k$-rational} lci singularities.  Taking a log resolution yields a Cartesian square
\[\xymatrix{Y_0 \ar @{^(->} [r]^{\imath_Y} \ar [d]_{\beta_0} & \YB \ar [d]^{\beta} \\ X_0 \ar @{^(->}^{\imath_X} [r] & \XB}\]
in which $\YB$ is smooth and $Y_0\subset \YB$ is SNC.  (We also write $\mathfrak{Y}$ for the restriction of $\YB$ to $\Delta$.) By the proof of \cite[Thm.~3.24]{FL}, we have $\gr_F^p H^r(\XB)\hookrightarrow \gr_F^p H^r(\YB)$ for $p\leq k$ (and any $r$), which 
implies exactness of 
\begin{equation}\label{e3r1}
0\to \gr_F^p H^r(\XB)\overset{\beta^*\oplus\imath_X^*}{\longrightarrow} \gr_F^p H^r(\YB)\oplus \gr_F^p H^r(X_0)\overset{\imath_Y^*-\beta_0^*}{\longrightarrow} \gr_F^p H^r(Y_0)\to 0.
\end{equation}
We claim that $\gr_F^p\mathtt{sp}^r_{\x}\colon \gr_F^pH^r(X_0)\to \gr_F^pH^r_{\lim}(X_t)$ is injective for $p\leq k$; since the ranks of $\gr_F^pH^r(X_0)$ and $\gr_F^pH_{\lim}^r(X_t)$ are equal in that range by \cite[Cor.~1.4]{FL}, this will force $\gr_F^p\mathtt{sp}_{\x}^r$ to be an isomorphism, giving the equalities of Hodge-Deligne numbers in the theorem.

Suppose then that $\xi\in \ker(\gr_F^p\mathtt{sp}_{\x}^r)$, and (noting that $H^r_{\lim}(Y_t)=H^r_{\lim}(X_t)$) pull it back via $\beta_0^*$ to $\tilde{\xi}\in\ker\{\gr_F^p\mathtt{sp}_{\fy}\colon \gr_F^pH^r(Y_0)\to\gr_F^pH^r_{\lim}(Y_t)\}$. Since $\fy$ is smooth, the local invariant cycle theorem applies to yield an element of $\gr_F^pH^r_{Y_0}(\fy)$ mapping to $\tilde{\xi}$; write $\tilde{\Xi}$ for its image in $\gr_F^pH^r(\YB)$, which satisfies $\imath_Y^*\tilde{\Xi}=\tilde{\xi}$.  Thus $(\tilde{\Xi},\xi)$ in the middle term of \eqref{e3r1} maps to zero in $\gr_F^pH^r(Y_0)$, and lifts to some $\Xi\in \gr_F^pH^r(\XB)$.  This $\Xi$ restricts to $\xi$ in $\gr_F^pH^r(X_0)$ and to zero in $\gr_F^pH^r(X_t)$ for $t\in\Delta^*$, since the latter restriction factors through $H^r(\YB)$ and thus through $H^r_{Y_0}(\fy)$.

Now because $X_0$ is $k$-Du Bois, we have $\gr_F^pH^r(X_0)\cong H^{r-p}(X_0,\Omega_{X_0}^p)$; and since $\XB$ is $k$-rational hence $k$-Du Bois we have $\gr_F^pH^r(\XB)\cong H^{r-p}(\XB,\Omega_{\XB}^p)$ as well.  These isomorphisms are compatible with restriction,\footnote{The log resolution of $(\XB,X_0)$ produces compatible hyperresolutions of $\XB$ and $\X_0$, whose respective $\phi^p$'s induce these isomorphisms.} and so the images $\Xi_t$ of $\Xi$ under the compositions
\begin{equation}\label{e3r2}
H^{r-p}(\XB,\Omega_{\XB}^p)\to \Gamma(\Delta,R^{r-p}\overline{\pi}_*\Omega^p_{\XB/\SB})\to H^{r-p}(X_t,\Omega_{X_t}^p)
\end{equation}
recover the restrictions of $\Xi$ from the last paragraph:  $\Xi_0=\xi$, and $\Xi_t=0$ for $t\neq 0$.  By \cite[Thm.~1.2]{FL} and the assumption that $X_0$ is $k$-Du Bois, $R^{r-p}\overline{\pi}_*\Omega^p_{\XB/\SB}$ is locally free (i.e., free on $\Delta$) for $p\leq k$.  It follows immediately that $\xi=0$, proving the claim.

For the (semi-)purity statement, any nonzero $\gamma\in (H^r_{\lim})^{p,q}$ (with $p\leq k$) comes from $\tilde{\gamma}\in (H^r(X_0))^{p,q}$, which gives $q\leq r-p$ and $\gamma\in \ker(N)$ (is at the bottom of an $N$-string).  If $q<r-p$, then $\gamma=N(\alpha)$ with $\alpha\in (H^r_{\lim})^{p+1,q+1}$.  But unless $p=k$, $\alpha\in \mathrm{im}(\mathtt{sp})$ (because $p+1\leq k$) hence $N(\alpha)=0$, a contradiction.
\end{proof}

\begin{rem}
In effect, Theorem \ref{T1par} refines the isomorphisms of $\gr_F^p$'s in \cite[Cor. 1.4]{FL} (replacing $H^*(X_t)$ by $H^*_{\lim}(X_t)$) ``in the $q$-direction'' by showing they are induced by the specialization map.
Since the LMHS is pure of weight $r$ outside the range $k\leq p\leq r-k$ for such a degeneration, the refinement is only ``interesting'' for $p=k$; but  this is still enough to produce nontrivial adjacency conditions in what follows.
\end{rem}

\begin{rem}
Sung Gi Park \cite[Thm.~11.1]{Pa} has independently proved a more general version of Theorem \ref{T1par}, by applying Saito's theory of mixed Hodge modules and results from \cite{PP}.  His version allows the general fiber to have $k$-rational lci singularities, whereas in ours the general fiber is smooth.
\end{rem}

\subsection{Multiparameter degenerations: normal-crossing discriminant}\label{S1.2}

Next we put ourselves in the situation of the diagram
\[\xymatrix{\x_s \ar @{^(->} [r] \ar [d] & \x \ar [d] & \x_{\Delta} \ar [d]\ar @{_(->} [l] \\ \{s\}\times \Delta \ar @{^(->} [r] & \underset{\tiny (s,t)}{\Delta \times \Delta} & \underset{\tiny u}{\Delta} \ar @{_(->} [l]}\]
with vertical morphisms projective, and smooth over $(\Delta^*)^2$.  Let $\H=\H^r$ denote the relative cohomology VHS on $(\Delta^*)^2$, with monodromies $T_s,T_t$. Since $\pi_1((\Delta^*)^2)\cong \ZZ\times\ZZ$ is abelian, $T_s$ and $T_t$ commute.

In fact, we are not concerned in this part about the fibers over the discriminant locus $(\{0\}\times \Delta) \cup (\Delta\times \{0\})$.  We are merely interested in the asymptotics of the restrictions of $\H$ to $\{s\}\times \Delta^*$ and the diagonal $\Delta^*$ (denoted $\DD^*$).  Writing $\psi_f$ for the nearby cycles functor for MHM (in the informal sense of ``limiting mixed Hodge structure at $f=0$''),
\begin{align}
\label{e1.2} H_{\lim}'&:=\psi_u(\H|_{\DD^*})&&\text{is a MHS ``at $(0,0)$'', while} \\
\label{e1.3} H_{\lim}(s)&:=\psi_t(\H|_{\{s\}\times \Delta^*})&&\text{is a MHS ``at $(s,0)$''.}
\end{align}
The weight filtrations are $W(N_s+N_t)[-r]_{\b}$ and $W(N_t)[-r]_{\b}$ respectively.  As $s$ varies, \eqref{e1.3} yields an admissible VMHS $\H_{\lim}(s)$ \cite[p. 76]{CK}, with limit at the origin satisfying\footnote{This is an immediate consequence of the existence of the local normal form [op.~cit., (2.8)] for a finite basechange of $\H^r$ killing semisimple parts of $T_s,T_t$.}
\begin{equation}\label{e1.4}
\psi_s \H_{\lim}(s)\cong H'_{\lim}.
\end{equation}
Write $\DI=\{h^{p,q}(H_{\lim}(s))\}$ and $\DI'=\{h^{p,q}(H'_{\lim})\}$ for the Hodge diamonds --- i.e., the functions on $\{0,1,\ldots,r\}^{\times 2}$ given by $(p,q)\mapsto h^{p,q}(\cdots)$.  

\begin{defn}\label{Dpol}
Whenever a pair of Hodge diamonds arise in the manner just described, we write $\DI\preceq \DI'$.  This defines the \emph{polarized relation} ``$\preceq$'' on the set of Hodge diamonds, where one should think of ``more degenerate'' as ``greater than''.  We shall also write $H_{\DI}\preceq H_{\DI'}$ for an ordered pair of MHS (or VMHS) whose Hodge diamonds are in polarized relation (viz., $H_{\lim}(s)\preceq H'_{\lim}$).
\end{defn}

See the beginning of \S\ref{S2} for an explicit description of the polarized relation.  Informally, the allowed degenerations of $\DI$ are induced by arbitrary combinations of degenerations of the primitive parts of the weight graded pieces (in weights between $0$ and $r$).  This is because:
\begin{itemize}[leftmargin=0.5cm]
\item by \eqref{e1.4}, the relative weight monodromy filtration exists and satisfies $M(W(N_t),N_s)=W(N_s+N_t)$, which means that $\DI$ can only degenerate in this way; and
\item the fact that the product of isometry\footnote{That is, automorphisms preserving the polarizing form.} groups of the $N_t$-primitive parts injects into the isometries of $H_{\lim}(s)$ is responsible for ``arbitrary combinations'' being possible. 
\end{itemize}
Describing the possible triples of Hodge diamonds for $\psi_t \H|_{\{s\}\times \Delta^*}$, $\psi_u\H_{\DD^*}$, and $\psi_s\H|_{\Delta^*\times\{t\}}$ is a much more difficult and interesting problem (cf.~\cite[\S8]{KPR}), but is not relevant here.

\subsection{Multiparameter degenerations: general discriminant}

Now consider a flat projective family $\XB\overset{\overline{\pi}}{\to} \SB$, with fibers smooth over the complement $\cs$ of the discriminant locus $\Sigma$.  (We do not need to assume that the base is smooth.)  Write $\Sigma=\sqcup \Sigma_{\alpha}$ for the decomposition into equisingular strata, and $\beta\colon \hat{\cs}\to \SB$ for a birational morphism with $\hat{\cs}$ smooth and $\hat{\Sigma}=\beta^{-1}(\Sigma)$ a NCD.  

Suppose that $\Sigma_1$ contains $\Sigma_0$ in its closure, and let $\mu\colon \Delta^2\to \hat{\cs}$ be a bi-disk embedding with $\mu((\Delta^*)^2)\subset \hat{\cs}\sm \hat{\Sigma}=\cs$ and $\beta(\mu(\Delta^*\times\{0\}))\subset \Sigma_1$, $\beta(\mu(0,0))\in \Sigma_0$.  Write $\rho=\beta\circ\mu$ and denote the $\rho$-base-change of $\XB\to\SB$ by $\x\to \Delta^2$, with fiberwise $r^{\text{th}}$ cohomology VHS $\H$.  In diagrammatic summary we have
\[\xymatrix@C=5pc{\XB \ar [d]^{\overline{\pi}} & \hat{\X} \ar [d]^{\hat{\pi}} \ar [l] & \x \ar [l] \ar [d] \\ \mspace{-40mu}\Sigma \subset \SB & \hat{\cs} \ar [l]^{\beta} & \Delta^2 \ar [l]^{\mu} \ar @/^2pc/ [ll]_{\rho}}\]
Denote the coordinates on $\Delta^2$ by $(s,t)$ and so forth, as in \S\ref{S1.2}.

Let $X_0$ be the fiber over $\rho(0,0)\in \Sigma_0$, and $X_1(s)$ the fiber over $\rho(s,0)\in \Sigma_1$ ($s\in \Delta^*$).  We are interested in the situation where $X_0$ has lci $k$-Du Bois singularities.  Since $\hat{\pi}$ remains flat projective and $\SB$ is smooth (hence lci), \cite{Av} gives that $\hat{\X}$, hence also fibers of $\hat{\pi}$, are lci in a neighborhood of $X_0$.  By semicontinuity of the singularity level in \cite[Thm.~3.37]{MP} (applied to $\mathcal{Z}:=\hat{\X}$) together with [op.~cit., Thm.~F] identifying singularity level with higher Du Bois type (using the lci assumption), we get that $X_1(s)$ has $k$-Du Bois lci singularities.  When $k=0$ we can use \cite[Cor.~4.2]{KS} to drop the lci assumption (and conclusion).

We are now in a position to apply Theorem \ref{T1par} to the restrictions of $\x\to \Delta^2$ to both the diagonal $\DD$ and to $\Delta\times\{s\}$.  This yields $\bar{F}^{\b}$-graded isomorphisms $$\text{$\gr_F^p H^r(X_0)\cong \gr_F^p \psi_u \H^r_{\DD}$\;\;\;\;and\;\;\;\;$\gr_F^p H^r(X_1(s))\cong\gr_F^p \psi_t\H^r$\;\;\;\;for $0\leq p\leq k$}$$ for each $r$.  Moreover, by \eqref{e1.4} we have $$\psi_u \H^r_{\DD}\cong \psi_s\psi_t\H^r$$ as mixed Hodge structures.  Finally, $\psi_s\psi_t\H^r$ is in polarized relation with $\psi_t\H$ in the sense of Definition \ref{Dpol}, written $$\psi_t\H^r\preceq\psi_s\psi_t\H^r.$$ Tracing through the identifications, we can state the conclusion as follows:

\begin{thm}\label{T3}
Given two strata $\Sigma_0,\Sigma_1$ in the discriminant locus of a flat projective family $\XB\to \SB$, with $\Sigma_0\subset \overline{\Sigma}_1$, and very general fibers $X_0,X_1$.  Assume that $X_0$ has $k$-Du Bois singularities, of lci type if $k>0$.  Then
\begin{equation}\label{eKL1}
H^r(X_1)\preceq H^r(X_0)\;\;\mod F^{k+1}\;\;(\forall r).
\end{equation}
\end{thm}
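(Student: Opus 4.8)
\emph{Setup.} The plan is to work on the birational model $\beta\colon\hat{\cs}\to\SB$ with $\hat{\Sigma}=\beta^{-1}(\Sigma)$ a normal crossing divisor, and to encode the adjacency $\Sigma_0\subset\overline{\Sigma}_1$ by a bi-disk $\mu\colon\Delta^2\to\hat{\cs}$ exactly as in the diagram preceding the statement: with $\rho=\beta\circ\mu$ and coordinates $(s,t)$ on $\Delta^2$, the diagonal $\DD=\{(u,u)\}$ maps to a curve through $\rho(0,0)\in\Sigma_0$, while $\{s\}\times\Delta$ (for fixed $s\in\Delta^*$) maps to a curve through $\rho(s,0)\in\Sigma_1$, and $\mu((\Delta^*)^2)$ lands in $\cs$. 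Base-changing $\XB\to\SB$ along $\rho$ gives $\x\to\Delta^2$ with relative $r^{\text{th}}$-cohomology VHS $\H=\H^r$ on $(\Delta^*)^2$; the one-parameter restrictions $\x|_{\DD}\to\DD$ and $\x|_{\{s\}\times\Delta}\to\Delta$ are then flat projective degenerations with special fibers $X_0$ and $X_1(s)$ respectively, smooth away from the center.

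\emph{Step 1 (singularity propagation).} I would first check that $X_1(s)$ has $k$-Du Bois lci singularities for very general $s$. Since $\hat{\pi}$ is flat projective over the smooth (hence lci) $\hat{\cs}$ and $X_0$ is lci, \cite{Av} forces $\hat{\X}$, and hence its nearby fibers, to be lci in a neighborhood of $X_0$; Chen's inversion of adjunction \cite[Thm.~1.2]{Ch} then upgrades $\hat{\X}$ to $k$-rational there; and semicontinuity of the singularity level \cite[Thm.~3.37]{MP}, combined with the identification of singularity level with higher Du Bois type \cite[Thm.~F]{MP} in the lci case, transports the $k$-Du Bois property from $X_0$ to the general fiber $X_1(s)$ over $\Sigma_1$. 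For $k=0$ one instead invokes \cite[Cor.~4.2]{KS} to remove the lci hypothesis and conclusion.

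\emph{Step 2 (apply Theorem \ref{T1par} twice, then identify nearby cycles).} Applying Theorem \ref{T1par} to $\x|_{\DD}$ and to $\x|_{\{s\}\times\Delta}$ yields, for $0\le p\le k$ and all $r$, the bigraded equalities $h^{p,q}(H^r(X_0))=h^{p,q}(\psi_u\H^r_{\DD})$ and $h^{p,q}(H^r(X_1(s)))=h^{p,q}(\psi_t\H^r)$, together with the vanishing refinement for $p<k$, $q\neq r-p$. By the local-normal-form computation behind \eqref{e1.4} (see \cite[p.~76]{CK}), the iterated nearby cycle satisfies $\psi_s\psi_t\H^r\cong\psi_u\H^r_{\DD}$ as MHS. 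Finally, the discussion of \S\ref{S1.2} — admissibility of the VMHS $\H_{\lim}(s)=\psi_t(\H|_{\{s\}\times\Delta^*})$ and the relative monodromy identity $M(W(N_t),N_s)=W(N_s+N_t)$ — says precisely that $(\psi_t\H^r,\psi_s\psi_t\H^r)$ is in polarized relation in the sense of Definition \ref{Dpol}, i.e.\ $\psi_t\H^r\preceq\psi_s\psi_t\H^r$. Chaining the four comparisons: the Hodge--Deligne numbers of $H^r(X_1)$ in $\gr_F^i$ for $i\le k$ agree with those of $\psi_t\H^r$; those of $H^r(X_0)$ agree with those of $\psi_s\psi_t\H^r$; and the latter pair is in polarized relation. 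This is exactly \eqref{eKL1}.

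\emph{Main obstacle.} The assembly in Step 2 is essentially bookkeeping once the inputs are in hand; the genuine content, and the step I expect to be hardest to pin down, is Step 1 — arguing that $X_1(s)$ inherits $k$-Du Bois lci singularities. This forces one to marshal inversion of adjunction together with semicontinuity of higher Du Bois/singularity level, to confirm these are available in exactly the needed generality (lci, arbitrary $k$, families over a possibly singular $\SB$ lifted to smooth $\hat{\cs}$), and to treat $k=0$ by a separate route. A secondary subtlety is verifying that the isomorphisms supplied by Theorem \ref{T1par} are truly bigraded, so that the polarized relation — a condition on \emph{all} the $h^{p,q}$ — need only be tested in the range $p\le k$, which is the only range the degeneration detects after reducing mod $F^{k+1}$.
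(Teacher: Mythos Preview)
Your argument tracks the paper's proof essentially step for step: set up the bi-disk via $\beta$ and $\mu$, propagate the $k$-Du Bois lci condition from $X_0$ to $X_1(s)$, apply Theorem~\ref{T1par} to the diagonal and to the $s$-slice, identify $\psi_u\H^r_{\DD}\cong\psi_s\psi_t\H^r$ via \eqref{e1.4}, and read off the polarized relation from Definition~\ref{Dpol}.

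One minor correction: your invocation of Chen's inversion of adjunction in Step~1 is out of place, since $X_0$ is not a Cartier divisor in the multiparameter total space $\hat{\X}$ (Chen's theorem is invoked by the paper only inside the proof of Theorem~\ref{T1par}, where the base is a curve). The step is in any case unnecessary --- the paper deduces that $X_1(s)$ is $k$-Du Bois directly from \cite{Av} together with semicontinuity \cite[Thms.~3.37 and F]{MP} (and \cite[Cor.~4.2]{KS} for $k=0$) --- so simply deleting the Chen reference brings your Step~1 in line with theirs.
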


\begin{rem}\label{R4}
Once again, the mnemonic for this relation is that $A\preceq B$ says ``$B$ is more degenerate than $A$''.  More precisely, let $\uh=(h^0,h^1,\ldots,h^r)$, $h^p:=h^{p,r-p}(X)$, denote the Hodge numbers for a (smooth) fiber of $\X\to\cs$.  The $0^{\text{th}}$ through $k^{\text{th}}$ columns of the Hodge-Deligne diagram for $H^r(X_1)$ and $H^r(X_0)$ look like LMHSs of a VHS with Hodge numbers $\uh$ --- namely, $\psi_t\H$ and $\psi_u\H_{\DD}$.  These (L)MHS types are fixed outside the box $(p,q)\in B_k:=[k+1,r-k-1]^2$, but will depend on the choice of $\mu$ and $\beta$ inside $B_k$.  The Theorem says that the first (L)MHS type has to have an admissible 1-parameter degeneration to the second, without including them into any larger VMHS, for \emph{some} choice of the data in $B_k$.
\end{rem}

To our knowledge Theorem \ref{T3} is already new in the $k=0$ case.

\begin{exa}\label{Ex5}
Anthes \cite{An} computed the KSBA compactification $\overline{\mathcal{M}}_{2,4}$ of the moduli of Horikawa surfaces with $K_X^2=2$ and $\chi(\co_X)=4$ (arising as double covers of $\PP^2$ branched over an octic curve). The correspondence between incidence relations on boundary strata and polarized relations on the mixed Hodge structures on the singular fibers they parametrize is briefly mentioned without proof.  (The same could be said for the more recent \cite{CFPR}.) The above argument (with $k=0$) gives a simple proof that extends to all compactifications of moduli with slc (hence, by \cite{KK}, Du Bois) singular fibers.

More precisely, for a smooth such surface we have $p_g=h^{2,0}=3$ and $h^{1,1}_{\text{pr}}=37$. Denote by $\Sigma_{a,b}\subset \overline{\mathcal{M}}_{2,4}$ the stratum of the boundary whose singular fiber has $(H^2)^{0,0}=a$ and $(H^2)^{0,1}=b$ hence $(H^2)^{0,2}=3-a-b$.  Write $\DI_{a,b}$ for the corresponding Hodge diamond.  In the next section (Prop.~\ref{P10}), we will show that \eqref{eKL1} is given by $\DI_{a_1,b_1}\preceq \DI_{a_0,b_0}$ $\iff$ $a_1+b_1\leq a_0+b_0$ and $a_1\leq a_0$. It is shown in \cite{An} that, in fact, \emph{all} of these polarized relations are realized by the incidence relations on the $\{\Sigma_{a,b}\}$.
\end{exa}

\subsection{Deformations of isolated singularities}

Turning to versal deformations of an isolated $k$-Du Bois \emph{hypersurface} singularity $x_0$, we would like to get weak polarized relations on weighted spectra (of $x_0$ and any $x_1$ into which it deforms, also $k$-dB).  Here we focus on the Hodge-Deligne numbers of $H_{\text{van}}$ only, so that we don't have to worry about how test bi-disks ramify.  (With additional work one might hope to strengthen the relations by taking such things into account.)  The vanishing cohomologies here, say $V_0$ and $V_1$, are for \emph{generic} unramified disks passing through the given point in the discriminant locus $\Sigma$ (under $x_0$ or $x_1$), so that the ``total space'' over each is a smoothing (i.e.~with smooth total space).  These disks seem unlikely occur in the same bi-disk, but we can work around this as follows.

Assume that the versal deformation is relatively compactified to a sufficiently high degree projective family, so that along the generic disks we have $$L_i:=H_{\lim,i}\overset{\mathtt{can}_i}{\twoheadrightarrow} V_i.$$  Now a key point is that, by Theorem \ref{T1par}, the LMHS mod $F^{k+1}$ is independent of the disk along which we approach a singularity.  Hence the same thing is true for $F^{n-k}$ of the LMHS, where $n$ is the fiberwise dimension.  So now, if our singularity $x_0$ with vanishing cohomology $V_0$ (parametrized by $\Sigma_0$) deforms to a singularity $x_1$ with vanishing cohomology $V_1$ (parametrized by $\Sigma_1$), the argument above again gives $\psi_t \H\preceq \psi_u \H_{\Delta}$, whence $$F^{n-k}L_1\preceq F^{n-k}L_0.$$  Now we need to reduce modulo the possible images of the cohomologies of the singular fibers (which are contained in $W_n$), since these are the kernels of the $\mathtt{can}_i$ in the vanishing-cycle exact sequence.  This brings us to

\begin{cor}\label{C6}
Suppose an isolated $k$-Du Bois singularity $x_0$ in dimension $n$ deforms versally into an isolated singularity $x_1$, and let $V_0,V_1$ denote their vanishing cohomology MHS types.  Then 
\begin{equation}\label{eKL2}
F^{n-k}V_1 \preceq F^{n-k}V_0\;\;\mod W_{n}.
\end{equation}
\end{cor}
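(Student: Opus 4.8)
The plan is to follow the argument sketched informally just before the statement of Corollary \ref{C6} and make each step precise. First I would set up the two generic smoothing disks: choose $\mu_i \colon \Delta \to \hat{\cs}$ (after passing to a resolution $\hat{\cs}$ of the base of the versal deformation on which $\hat{\Sigma}$ is NCD, exactly as in \S1.3) so that the total space of the $\mu_i$-basechange is smooth (a genuine smoothing of $x_i$), with $L_i = H_{\lim,i}$ surjecting onto $V_i$ via $\mathtt{can}_i$. The technical wrinkle is that $x_0$ and $x_1$ need not be approached inside a common bi-disk; the device for getting around this is that, by Theorem \ref{T1par}, $\gr_F^p L_i$ for $0 \le p \le k$ is \emph{independent} of the approach disk (it is canonically $\gr_F^p H^r(X_i^{\text{sing}})$ of the singular fiber, via the specialization map), and hence by the duality $\gr_F^p \leftrightarrow \gr_F^{n-p}$ built into the polarization, $F^{n-k}L_i$ is likewise disk-independent. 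So I may replace the generic disks by disks sitting inside a common bi-disk $\mu \colon \Delta^2 \to \hat{\cs}$ with $\beta\mu(\Delta^* \times \{0\}) \subset \Sigma_1$ and $\beta\mu(0,0) \in \Sigma_0$, at the cost of changing $L_i$ only outside the range that matters for $F^{n-k}$.

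Second, with the common bi-disk in hand, the argument of Theorem \ref{T3} applies verbatim to the basechanged family: $\psi_t \H \preceq \psi_s\psi_t\H \cong \psi_u \H_{\DD}$, and after reading off Hodge–Deligne numbers and using Theorem \ref{T1par} again along the diagonal $\DD$ and along $\Delta \times \{s\}$, I get $F^{n-k}L_1 \preceq F^{n-k}L_0$ (this is the ``$\preceq$'' of Definition \ref{Dpol} restricted to the $p \le k$, equivalently $p \ge n-k$, columns — exactly the weak polarized relation of \eqref{eKL1} after the Hodge-symmetric reindexing). Third, I need to descend this relation from $L_i$ to $V_i$. The vanishing-cycle exact sequence gives $\mathtt{can}_i \colon L_i \twoheadrightarrow V_i$ with $\ker(\mathtt{can}_i) = \mathrm{im}(\mathtt{sp}_i)$, a quotient of $H^n$ of the singular fiber, which lies in $W_n L_i$. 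Thus $F^{n-k}V_i$ is computed from $F^{n-k}L_i$ by quotienting out a sub-MHS concentrated in weights $\le n$; modulo $W_n$ the map $F^{n-k}L_i \to F^{n-k}V_i$ is an isomorphism of the weight-$>n$ parts, and I need to check that the polarized relation $F^{n-k}L_1 \preceq F^{n-k}L_0$ descends to $F^{n-k}V_1 \preceq F^{n-k}V_0$ after this quotient. Here I would invoke the combinatorial description of $\preceq$ recalled at the start of \S\ref{S2}: $\preceq$ is tested weight-graded-piece by weight-graded-piece on primitive parts, so passing to the graded quotient by a weight-$\le n$ sub-MHS simply deletes the weights $\le n$ from both sides and preserves the relation in the surviving weights $> n$ — which is precisely the content of ``mod $W_n$'' in \eqref{eKL2}.

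The main obstacle I expect is the second step's disk-independence bookkeeping combined with making ``$F^{n-k}L_1 \preceq F^{n-k}L_0$'' a well-formed instance of Definition \ref{Dpol}: a priori $\preceq$ is a relation between full Hodge diamonds of VHS with a \emph{fixed} set of Hodge numbers $\uh$, whereas here the two sides are truncations of LMHS of possibly different smoothings (the Milnor numbers of $x_0$ and $x_1$ differ, so $L_0$ and $L_1$ have different total dimension). The resolution is that the common bi-disk forces both $\psi_t\H$ and $\psi_u\H_{\DD}$ to be LMHS of the \emph{same} VHS $\H$ on $(\Delta^*)^2$ (whose generic fiber is a common nearby smooth fiber), so the relation $\psi_t\H \preceq \psi_u\H_{\DD}$ genuinely lives in the setting of Definition \ref{Dpol}; it is only the subsequent passage to $V_i$ via $\mathtt{can}_i$ that cuts the diamonds down asymmetrically, and that passage is exactly what the ``mod $W_n$'' absorbs. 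I would state this carefully and note that the weak relation \eqref{eKL2} is, correspondingly, only claiming the inequalities on the Hodge–Deligne numbers $h^{p,q}$ for $p \ge n-k$ (equivalently $p \le k$ after Hodge symmetry within each weight) and in weights $> n$, where all three of Theorem \ref{T1par}, disk-independence, and the combinatorial description of $\preceq$ are available.
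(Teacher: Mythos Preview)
Your proposal is correct and follows essentially the same approach as the paper's own argument (which, as you note, is the prose immediately preceding the statement of Corollary~\ref{C6}): disk-independence of $F^{n-k}L_i$ via Theorem~\ref{T1par}, the bi-disk argument of Theorem~\ref{T3} to get $F^{n-k}L_1\preceq F^{n-k}L_0$, and then passage to $V_i$ by quotienting out $\ker(\mathtt{can}_i)\subset W_n$. Your elaborations on the descent step and the Milnor-number bookkeeping go somewhat beyond what the paper spells out, but are consistent with it and fill in details the paper leaves implicit.
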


\begin{rem}
$\gr_F^{n-k}$ is the rightmost potentially nontrivial column in the Hodge-Deligne diagram for vanishing cohomology of $k$-dB singularities.
\end{rem}

\begin{exa}
For deformations of Du Bois surface singularities, this already has content:  the highest possible spectral number is $2$, and this can come with an accompanying weight of $3$ or $4$.  If the ``highest'' terms of $\tilde{\sigma}_{x_0}$ are $a_0[(2,4)]+b_0[(2,3)]$, and those of $\tilde{\sigma}_{x_1}$ are $a_1[(2,4)]+b_1[(2,3)]$, then we must have $a_1\leq a_0$ and $a_1+b_1\leq a_0+b_0$.  One would get something similar for $1$-dB 4-fold singularities.
\end{exa}

\section{Computing the relation \eqref{eKL1}}\label{S2}

Given a list of Hodge numbers $\uh=(h^0,h^1,\ldots,h^r)$, with $h^p=h^{r-p}$, denote by ``$\DI$'' a collection of nonnegative integers $\{\DI^{p,q}\}_{p,q\in [0,r]}$ representing Hodge-Deligne numbers of a LMHS type into which a VHS of type $\uh$ can degenerate.  The necessary and sufficient conditions are that $\DI^{q,p}=\DI^{p,q}=\DI^{r-q,r-p}=\DI^{r-p,r-q}$, and $\DI^{p,q}\leq \DI^{p+1,q+1}$ for $p+q<r$.  These are the \emph{admissible Hodge diamonds}.

With no constraint on the Mumford-Tate group, the polarized relation $\DI_1\preceq \DI_0$ is given by Definition \ref{Dpol}:  it means that a MHS of type $\DI_0$ can arise as an admissible degeneration of MHS of type $\DI_1$.  An \emph{explicit} description was worked out by Kerr, Pearlstein, and Robles \cite[Thm.~5.18]{KPR}.  Begin by writing $\DI_1$ as a sum of the form $\sum_{w=0}^r \sum_{a=0}^{r-w} P_w(-a)$, where $P_w^{p,w-p}=\DI^{p,w-p}-\DI^{p-1,w-p-1}$ are the primitive Hodge numbers in  weight $w$ and $P_w(-a)^{p,q}:=P_w^{p-a,q-a}$.  Let $\DI_{P_w}$ be any admissible Hodge diamond for $P_w$.  Then $$\textstyle\DI_0:=\sum_{w=0}^r\sum_{a=0}^{r-w}\DI_{P_w}(-a)$$ satisfies $\DI_1\preceq \DI_0$; and according to [op.~cit.], all polarized relations arise in this way.

While this looks fairly trivial, converting it into simple inequalities of the $\DI_i^{p,q}$ is not, and the relation turns out not to define a poset in general.  In this section we consider three basic combinatorial problems:
\begin{enumerate}
\item give a numerical characterization of the full polarized relations (for given $\uh$);
\item numerically describe the weak polarized relations \eqref{eKL1} (for given $\uh$ and $k$); and
\item determine for which $\uh$ each of these define (a) a linear order, (b) a partial order, or (c) fail to yield a poset.
\end{enumerate}
We will give a complete solution in weight 2, partial results in weight 3, and a general result for weak relations assuming certain lower bounds on middle Hodge numbers.

\begin{rem}
(i) These relations can be encoded in directed graphs $\R(\uh)$ and $\R_k(\uh)$, with admissible Hodge diamonds (or diamonds mod $F^{k+1}$) as vertices and $\preceq$ (or $\preceq$ mod $F^{k+1}$) as arrows.  By Remark \ref{R4}, $\R_k(\uh)$ is a quotient of $\R(\uh)$ in the sense that it is surjective on vertices and we draw an arrow from one vertex to another if there is an arrow between any pair of vertices in their preimages.  It encodes the polarized relations modulo the ``black box'' $B_k=[k+1,r-k-1]^2$.

(ii) Note that the arrows do not in general obey transitivity, and so ``all arrows must be drawn'' unless it is explicitly stated that the graph describes a poset.

(iii) For LMHSs arising from $k$-dB singular fibers, we have to consider the subgraph $\R^{\circ}_k(\uh)\subseteq \R_k(\uh)$ whose vertices correspond to MHSs which are pure outside $B_{k-1}=[k,r-k]^2$.  One might expect that this can be a poset even if $\R_k(\uh)$ is not, and $\R_k(\uh)$ could be a poset even if $\R(\uh)$ is not.  Notice that in $\R^{\circ}_k(\uh)$, all of the action is taking place on the ``frontier'' of $B_{k-1}$, i.e. $B_{k-1}\sm B_k$.
\end{rem}

In the setting of Theorem \ref{T3}, we can associate to each $k$-dB stratum a well-defined ``Hodge diamond modulo $B_k$'', denoted $\widetilde{\DI}\in \R_k^{\circ}(\uh)$, as follows.  We take $\widetilde{\DI}^{p,q}:=h^{p,q}(H^r(X))$ for $0\leq p\leq k$ ($q$ arbitrary), where $X$ is a fiber over the stratum, and then extend by symmetry to all $(p,q)\notin B_k$.

\begin{cor}
For any ``adjacent pair'' $\Sigma_0\subset \overline{\Sigma}_1$ of $k$-dB strata as in Theorem \ref{T3}, the Hodge diamonds satisfy $\widetilde{\DI}_1\preceq \widetilde{\DI}_0$ in $\R_k^{\circ}(\uh)$.
\end{cor}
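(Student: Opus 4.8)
The plan is to recognize this corollary as a direct repackaging of Theorems \ref{T1par} and \ref{T3} into the graph language introduced above. Two things need checking: first, that the mixed Hodge structures $H^r(X_0)$ and $H^r(X_1)$ of the very general fibers do determine well-defined vertices $\widetilde{\DI}_0,\widetilde{\DI}_1$ of $\R_k^{\circ}(\uh)$; and second, that there is an arrow $\widetilde{\DI}_1\to\widetilde{\DI}_0$, i.e.\ $\widetilde{\DI}_1\preceq\widetilde{\DI}_0$.

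The first point is essentially the content of Theorem \ref{T1par}. Given a very general fiber $X$ over a $k$-dB stratum, a general one-parameter disk through the corresponding point yields, by restriction of the family, a flat projective degeneration $\x'\to\Delta$ with special fiber $X$ and smooth general fibers, to which Theorem \ref{T1par} applies; thus $h^{p,q}(H^r(X))=h^{p,q}(H^r_{\lim})$ for all $p\le k$, with $H^r_{\lim}$ an admissible Hodge diamond by Schmid \cite{Sd}. Hence the numbers $\widetilde{\DI}^{p,q}$ for $p\le k$, extended by the symmetries $\DI^{q,p}=\DI^{p,q}=\DI^{r-q,r-p}$, agree with the restriction of an admissible Hodge diamond to the complement of $B_k$; the purity-mod-$F^k$ clause of Theorem \ref{T1par} forces all impure classes of $H^r(X)$ into $B_{k-1}$, so in fact $\widetilde{\DI}\in\R_k^{\circ}(\uh)$; and the independence of the LMHS mod $F^{k+1}$ from the chosen disk (again Theorem \ref{T1par}), together with generic constancy of the semicontinuous dimensions $\dim\gr_F^p$ via \cite[Cor.~1.4]{FL}, makes $\widetilde{\DI}$ depend only on the stratum (and $r$).

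For the relation I would unwind the quotient construction of $\R_k(\uh)$ from $\R(\uh)$: an arrow $\widetilde{\DI}_1\to\widetilde{\DI}_0$ exists precisely when some admissible Hodge diamonds $D_1,D_0$ reducing mod $B_k$ to $\widetilde{\DI}_1,\widetilde{\DI}_0$ satisfy the full polarized relation $D_1\preceq D_0$ in $\R(\uh)$. The proof of Theorem \ref{T3} produces such a pair: choosing the bi-disk $\mu$ so that $\rho(s,0)$ and $\rho(0,0)$ are very general points of $\Sigma_1$ and $\Sigma_0$, set $D_1:=\{h^{p,q}(\psi_t\H^r)\}$ and $D_0:=\{h^{p,q}(\psi_u\H^r_{\DD})\}$. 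By \eqref{e1.4}, $\psi_u\H^r_{\DD}\cong\psi_s\psi_t\H^r$, and by Definition \ref{Dpol}, $\psi_t\H^r\preceq\psi_s\psi_t\H^r$; hence $D_1\preceq D_0$. Applying Theorem \ref{T1par} along $\Delta\times\{s\}$ (special fiber $X_1(s)$) and along $\DD$ (special fiber $X_0$) identifies the restrictions of $D_1$ and $D_0$ to the complement of $B_k$ with the Hodge-Deligne diagrams of $H^r(X_1)$ and $H^r(X_0)$, i.e.\ with $\widetilde{\DI}_1$ and $\widetilde{\DI}_0$. Therefore $\widetilde{\DI}_1\preceq\widetilde{\DI}_0$ in $\R_k(\uh)$, and since both vertices were shown to lie in $\R_k^{\circ}(\uh)$, the arrow lies there as well.

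The whole statement is carried by Theorems \ref{T1par} and \ref{T3}, so I expect the only real work to be bookkeeping rather than any substantial new argument: one must verify (a) that $\widetilde{\DI}$ is genuinely independent of the very general fiber chosen over the stratum and of the test disk used to compute it, which reduces to the disk-independence of the LMHS mod $F^{k+1}$ in Theorem \ref{T1par} plus generic constancy of semicontinuous invariants, and (b) that the two admissible diamonds $\psi_t\H^r$ and $\psi_s\psi_t\H^r$ appearing in the proof of Theorem \ref{T3} do reduce modulo $B_k$ to the vertices attached to the very general fibers $X_1,X_0$ --- arranged by running that proof with $\mu$ chosen through very general points of the two strata. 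Beyond this matching of definitions I do not anticipate any obstacle.
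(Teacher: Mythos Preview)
Your proposal is correct and follows exactly the route the paper intends: the corollary is stated without proof because it is meant as an immediate translation of Theorem~\ref{T3} (via Remark~\ref{R4} and the quotient description of $\R_k(\uh)$) into the graph language, and you have simply written out the bookkeeping the paper leaves implicit. The only minor remark is that the paper already asserts well-definedness of $\widetilde{\DI}$ in the paragraph preceding the corollary, so your first point is verifying that assertion rather than part of the corollary's proof proper.
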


\subsection{Degenerations of weight 2 Hodge structures}
We begin with the simplest nontrivial case, where $r=2$.  Then we have $\uh=(h^0,h^1,h^0)$, and there is only one relations graph $\R(\uh)=\R_0(\uh)=\R^{\circ}_0(\uh)$ as the Hodge numbers on the ``frontier'' $B_{-1}\sm B_0$ already determine the Hodge number $\DI^{1,1}_i$ in $B_0$.  In other words, for the middle cohomology of surfaces there is no difference between the weak and strict relations.  Write $$\text{$\DI_i^{0,0}=\DI_i^{2,2}=a_i$ \;\;\;\;and\;\;\;\; $\DI_i^{0,1}=\DI_i^{1,0}=\DI_i^{1,2}=\DI_i^{1,2}=b_i$;}$$ then we also have $\DI_i^{0,2}=\DI_i^{2,0}=h^0-a_i-b_i$ and $\DI_i^{1,1}=h^1-2b_i$.  These are admissible Hodge diamonds (i.e.~vertices of $\R(\uh)$) iff 
\begin{equation}
a_i+2b_i\leq h^1\;\;\;\text{and}\;\;\;a_i+b_i\leq h^0,
\end{equation}
since we must have $\DI_i^{0,0}\leq \DI_i^{1,1}$ (and all $\DI_i^{p,q}\geq 0$).

\begin{prop}\label{P10}
Suppose $\DI_0$ and $\DI_1$ are admissible. We have $\DI_1\preceq \DI_0$ $\iff$ 
\begin{equation}\label{e22}
a_1\leq a_0,\;\;a_1+b_1\leq a_0+b_0,\;\;\text{and}\;\;a_0+b_0\leq h^1-b_1.
\end{equation}
\end{prop}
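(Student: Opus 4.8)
The plan is to unwind the combinatorial description of $\preceq$ from \cite[Thm.~5.18]{KPR} in the weight-2 case and show it amounts exactly to the three inequalities in \eqref{e22}. First I would set up notation: starting from the admissible diamond $\DI_1$, compute its primitive Hodge numbers $P_w^{p,q}$ in weights $w=0,1,2$. In weight $2$ we get $P_2^{2,0}=P_2^{0,2}=h^0-a_1-b_1$, $P_2^{1,1}=h^1-2b_1-(h^0-a_1-b_1)=h^1-h^0+a_1-b_1$, and $P_2^{2,0}+\text{(half of }P_2^{1,1})$ contributing the weight-2 piece; in weight $1$ we get $P_1^{1,0}=P_1^{0,1}=b_1$ (occurring at shifts $a=0,1$); in weight $0$ we get $P_0^{0,0}=a_1$ (at shifts $a=0,1,2$). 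The recipe says $\DI_0$ is obtained by independently choosing an admissible degeneration diamond $\DI_{P_w}$ for each primitive block and reassembling $\sum_w\sum_a \DI_{P_w}(-a)$. So the content of the proposition is: which $(a_0,b_0)$ are reachable this way.

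Next I would carry out the reachability analysis block by block. The weight-$0$ block ($a_1$ copies of the Tate-type $(0,0)$, placed at shifts $0,1,2$, i.e. along the diagonal $(0,0),(1,1),(2,2)$) is already pure of weight $0$ and has no nontrivial degenerations, so it contributes $a_1$ rigidly to $\DI_0^{0,0}$; hence $a_0\geq a_1$ from this block. The weight-$1$ block ($b_1$ copies of the rank-2 weight-1 Hodge structure, at shifts $0$ and $1$) can either stay pure (contributing $b_1$ to $\DI_0^{0,1}$ and to $\DI_0^{1,2}$) or each copy can degenerate to the Tate-type LMHS at $(0,0)+(1,1)$ (shift $0$) resp. $(1,1)+(2,2)$ (shift $1$); say $j\le b_1$ copies degenerate at shift $0$ and $j'\le b_1$ at shift $1$. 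By the symmetry $\DI^{p,q}=\DI^{r-q,r-p}$ forced on $\DI_0$, we need $j=j'$, contributing $j$ more to $\DI_0^{0,0}$ and $b_1-j$ to $\DI_0^{0,1}$. The weight-$2$ primitive block is the most flexible: the $(1,1)$-part (of which there are $P_2^{1,1}$ copies) is already pure; the $(2,0)+(0,2)$ part ($h^0-a_1-b_1$ copies) can either stay, or degenerate to weight-1 type $(1,0)+(0,1)$, or all the way to Tate $(0,0)$; let $\ell$ be the number going all the way to weight $0$ and $m$ the number stopping at weight $1$. Then $\DI_0^{0,0}=a_1+j+\ell$, $\DI_0^{0,1}=(b_1-j)+m$, and admissibility/nonnegativity constrain $j+\ell+m\le b_1 + (h^0-a_1-b_1)$-type bounds. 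Collecting: $a_0=a_1+j+\ell$ gives $a_0\ge a_1$; $a_0+b_0 = a_1+b_1+\ell-(\text{something})$... — more carefully, $a_0+b_0 = (a_1+j+\ell)+(b_1-j+m) = a_1+b_1+\ell+m\ge a_1+b_1$, and the upper bound $\ell+m\le h^0-a_1-b_1$ together with $b_1 = \DI_1^{0,1}$ should translate into $a_0+b_0\le h^1-b_1$ after using $h^1-2b_0 = \DI_0^{1,1}\ge\DI_0^{0,0}=a_0$ — actually the third inequality $a_0+b_0\le h^1-b_1$ I expect comes from admissibility of the \emph{target} combined with the constraint that $b_1$ of the original weight-1 rank stays "visible", i.e. from $\DI_1^{0,1}=b_1\le\DI_1^{1,2}\le$ whatever survives. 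I would nail down which inequality each degree of freedom produces and check there are no hidden constraints.

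For the converse (sufficiency of \eqref{e22}) I would simply construct, given $(a_0,b_0)$ satisfying the three inequalities plus admissibility of $\DI_0$, an explicit choice of $j,\ell,m$ (equivalently, an explicit sequence of primitive-block degenerations) realizing it; this is a short linear-algebra feasibility check — pick $j=0$, take $\ell+m = a_0+b_0-a_1-b_1\ge 0$ and $\ell = a_0-a_1\ge 0$, so $m=b_0-b_1+(a_0-a_1)\cdot 0$... one solves $\ell=a_0-a_1$, $m=b_0-b_1+a_1-a_1$, needing $\ell,m\ge 0$ and $\ell+m\le h^0-a_1-b_1$; the latter is exactly $a_0+b_0\le h^0$, which is admissibility of $\DI_0$, so I'd double-check that $a_0+b_0\le h^1-b_1$ is what's actually needed versus $a_0+b_0\le h^0$ and reconcile (one of these may be implied). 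The main obstacle I anticipate is precisely this bookkeeping: correctly identifying the \emph{third} inequality $a_0+b_0\le h^1-b_1$ as the binding one (rather than the weaker admissibility bound $a_0+b_0\le h^0$ on $\DI_0$ alone), which presumably encodes that a VHS of type $\uh$ cannot degenerate past what its \emph{own} middle Hodge number allows once $b_1$ worth of weight-1 classes are already "spent" on the target — i.e., the interaction between the source diamond $\DI_1$ and the target $\DI_0$, not just the two admissibilities separately. Once that is pinned down, necessity and sufficiency both follow from the explicit block recipe.
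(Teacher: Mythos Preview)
Your overall strategy matches the paper's: decompose $\DI_1$ into primitive blocks in weights $0,1,2$, enumerate the admissible degenerations of each block, and read off the reachable $(a_0,b_0)$.  The weight-$0$ and weight-$1$ analyses are correct (your $j\le b_1$ is the move the paper calls ``type~III'').  The gap is in your treatment of the weight-$2$ primitive block.

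You assert that ``the $(2,0)+(0,2)$ part \dots\ can either stay, or degenerate to weight-$1$ type $(1,0)+(0,1)$, or all the way to Tate $(0,0)$'', independently of the $(1,1)$-part.  This is false: a pure weight-$2$ Hodge structure with Hodge numbers $(1,0,1)$ admits \emph{no} nontrivial admissible degeneration, since the condition $\DI^{0,0}\le\DI^{1,1}$ forces $\DI^{0,0}=0$ when $h^{1,1}=0$.  The correct atomic moves each \emph{consume} $(1,1)$-classes: type~I takes a $(1,1,1)$ sub-HS to the $N$-string $(0,0)+(1,1)+(2,2)$; type~II takes a $(1,2,1)$ sub-HS to $(1,0)+(0,1)+(2,1)+(1,2)$.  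Performing $k$ copies of~I and $\ell$ copies of~II is therefore constrained by $k+\ell\le h^0-a_1-b_1$ (available $(2,0)$-classes) \emph{and} by $k+2\ell\le h^1-a_1-2b_1$ (available primitive $(1,1)$-classes).  This second constraint is precisely what your decomposition loses, and it is the source of the third inequality: writing $m$ for the number of type-III moves (your~$j$), one has $a_0=a_1+k+m$ and $b_0=b_1+\ell-m$, so $a_0+2b_0=a_1+2b_1+k+2\ell-m\le h^1-m$, whence $a_0+b_0\le h^1-m-b_0=h^1-b_1-\ell\le h^1-b_1$.  Your closing paragraph correctly senses that the third inequality encodes an interaction beyond the separate admissibility of $\DI_0$ and $\DI_1$; the mechanism is exactly this $(1,1)$-consumption.  (Incidentally, $P_2^{1,1}=\DI_1^{1,1}-\DI_1^{0,0}=h^1-2b_1-a_1$, not $h^1-2b_1-(h^0-a_1-b_1)$.)

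Once this is corrected, both directions go through as in the paper: necessity by the computation above, and sufficiency by exhibiting explicit $(k,\ell,m)$ after splitting on the sign of $b_0-b_1$.
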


\begin{proof}
Write 
\begin{itemize}[leftmargin=0.5cm]
\item ``I'' for the degeneration sending a weight 2 HS of type $(1,1,1)$ to a MHS with $h^{0,0}=h^{1,1}=h^{2,2}=1$;
\item ``II'' for the degeneration sending a weight 2 HS of type $(1,2,1)$ to a MHS with $h^{1,0}=h^{0,1}=h^{2,1}=h^{1,2}=1$; and
\item ``III'' for the degeneration sending a MHS with $h^{1,0}=h^{0,1}=h^{2,1}=h^{1,2}=1$ to a MHS with $h^{0,0}=h^{2,2}=1$ and $h^{1,1}=2$.
\end{itemize}
Any degeneration of MHS from $\DI_1$ to $\DI_0$ is a sum of $k$ copies of ``I'', $\ell$ copies of ``II'', and $m$ copies of ``III''.  The first two can only use the primitive part of the $\gr^W_2$ of $\DI_1$.  Therefore such a degeneration is possible iff $m\leq b_1$, $k+\ell\leq h^0-a_1-b_1$, and $k+2\ell\leq h^1-a_1-2b_1 (=\DI_{1,\text{pr}}^{1,1})$.  It yields $a_0=a_1+m+k$ and $a_0+b_0=a_1+b_1+k+\ell$, so also $b_0=b_1+\ell-m$ and $a_0+2b_0=a_1+2b_1+k+2\ell-m$.

($\implies$) On the one hand, this gives $a_0-a_1=m+k\geq 0$, $a_0+b_0-(a_1+b_1)=k+\ell\geq 0$, and $a_0+b_0=a_0+2b_0-b_0= a_1+2b_1+k+2\ell-m-b_0\leq h^1-m-b_0=h^1-b_1-\ell\leq h^1-b_1$.

($\impliedby$) For the converse, first suppose $b_1\leq b_0$ in addition to \eqref{e22}.  Then we can take $m=0$, $\ell=b_0-b_1$, and $k=a_0-a_1$.  If instead $b_0\leq b_1$, we can take $\ell=0$, $m=b_1-b_0$, and $k=a_0+b_0-a_1-b_1$.  In either case, we have $k,\ell,m\geq 0$ and the degeneration exists.
\end{proof}

\begin{cor}
The polarized relation defines a poset if and only one of the following hold:
\begin{itemize}[leftmargin=0.5cm]
\item $h^0$ or $h^1$ is $1$ \textup{(}in which case the order is linear\textup{)};
\item $h^0\geq 2$ and $h^1\geq 2h^0-1$ \textup{(}where the order is linear only for $\uh=(2,3,2)$\textup{)}.
\end{itemize}
\end{cor}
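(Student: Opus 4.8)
The plan is to treat the three order axioms separately. Writing $s_i:=a_i+b_i$, Proposition~\ref{P10} says $\DI_1\preceq\DI_0$ iff $a_1\le a_0$, $s_1\le s_0$, and $s_0\le h^1-b_1$. Reflexivity is automatic: the third inequality for $\DI_0=\DI_1$ reads $a+2b\le h^1$, which is precisely admissibility of $\DI$. Antisymmetry is automatic too: $\DI_1\preceq\DI_0\preceq\DI_1$ forces $a_0=a_1$ and $s_0=s_1$, hence $b_0=b_1$. So $\preceq$ is a partial order if and only if it is transitive, and I would prove the Corollary in this form.

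For transitivity I would first note that, given admissible $A\preceq B\preceq C$, chaining the first two inequalities of Proposition~\ref{P10} yields $a_A\le a_C$ and $s_A\le s_C$ automatically, so $A\preceq C$ can fail only through the last inequality, i.e. when $s_C+b_A>h^1$. Therefore transitivity fails exactly when there exist admissible diamonds $A,B,C$ with $a_A\le a_B\le a_C$, $s_A\le s_B\le s_C$, $s_B+b_A\le h^1$, $s_C+b_B\le h^1$, and $s_C+b_A>h^1$. The key reduction I would carry out is that such a triple exists if and only if
\[
\min(h^0,h^1)+\min\!\big(h^0,\lfloor h^1/2\rfloor\big)>h^1 .
\]
For ``only if'': admissibility of $A$ gives $b_A\le\min(h^0,\lfloor h^1/2\rfloor)$, and admissibility of $C$ gives $s_C=a_C+b_C\le\min(h^0,h^1)$, so $s_C+b_A$ is always bounded by the left-hand side. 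For ``if'': with $m:=\min(h^0,\lfloor h^1/2\rfloor)$ and $M:=\min(h^0,h^1)$ (so $m\le M$), the triple $A=(0,m)$, $B=(m,0)$, $C=(M,0)$ is admissible, and one checks directly that $A\preceq B$ and $B\preceq C$ while $s_C+b_A=M+m>h^1$, so $A\not\preceq C$. I expect this equivalence to be the main obstacle: one has to confirm that the two a priori independent maximizations (large $b_A$, large $s_C$) are simultaneously realizable and always ``bridgeable'' by a legitimate middle term $B$, and that no subtler configuration can push $s_C+b_A$ higher.

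Granting the criterion, what remains is a short arithmetic case split on whether $\min(h^0,h^1)+\min(h^0,\lfloor h^1/2\rfloor)\le h^1$. If $h^0=1$ the left-hand side is at most $2$ and is $\le h^1$ in every case, so $\preceq$ is transitive; likewise if $h^1\le 1$. If $h^0\ge 2$, I would split on $h^1$: when $h^1\ge 2h^0$ the left-hand side equals $2h^0\le h^1$; when $h^1=2h^0-1$ it equals $h^0+(h^0-1)=h^1$; when $h^0\le h^1\le 2h^0-2$ it equals $h^0+\lfloor h^1/2\rfloor>h^1$ (using $h^0>\lceil h^1/2\rceil$); and when $2\le h^1<h^0$ it equals $h^1+\lfloor h^1/2\rfloor>h^1$. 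Thus for $h^0\ge 2$ transitivity holds iff $h^1\ge 2h^0-1$, and altogether $\preceq$ is a poset iff $h^0=1$, or $h^1=1$, or $h^0\ge 2$ and $h^1\ge 2h^0-1$ (the degenerate single-vertex cases $h^0=0$ or $h^1=0$ aside).

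For the linearity assertions I would use that a poset here is a linear order iff every two admissible diamonds are comparable. If $h^0=1$ (resp.\ $h^1=1$), admissibility leaves only $(0,0),(1,0),(0,1)$ (resp.\ $(0,0),(1,0)$), and Proposition~\ref{P10} shows these form the chain $(0,0)\prec(0,1)\prec(1,0)$ (resp.\ $(0,0)\prec(1,0)$), so the order is linear. If $h^0\ge 2$ and $h^1\ge 2h^0-1$ with $(h^0,h^1)\ne(2,3)$, then $h^1\ge 4$, so $(1,0)$ and $(0,2)$ are both admissible, and they are incomparable (each of the first two inequalities of Proposition~\ref{P10} fails in one direction), so the order is not linear. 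Finally, for $\uh=(2,3,2)$ the five admissible diamonds $(0,0),(0,1),(1,0),(1,1),(2,0)$ are pairwise comparable and form a single chain in that order, which is the unique linear instance in this regime.
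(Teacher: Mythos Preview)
Your proof is correct, and in fact cleaner than the paper's. The paper argues the two directions separately and somewhat ad hoc: for $h^1\le 2h^0-2$ it writes down parity-dependent counterexamples ($\DI_{0,h^1/2}\preceq\DI_{h^1/2,0}\preceq\DI_{1+h^1/2,0}$ for $h^1$ even, and an analogous odd variant), while for $h^1\ge 2h^0-1$ it proves the missing inequality $a_0+b_0\le h^1-b_2$ by a direct chain of estimates ending in a small case analysis. You instead isolate the single numerical criterion
\[
\min(h^0,h^1)+\min\!\big(h^0,\lfloor h^1/2\rfloor\big)>h^1
\]
as the exact obstruction to transitivity, which unifies both directions: the ``only if'' half is the trivial bound $s_C+b_A\le M+m$ coming from admissibility, and the ``if'' half is your single parity-free witness $A=(0,m)$, $B=(m,0)$, $C=(M,0)$. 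This buys you a uniform argument and an explicit closed-form threshold; the paper's approach, by contrast, is closer to the geometric picture (the discussion of how a type-$\III$ move can enable a subsequent type-$\I$), which gives more intuition for \emph{why} transitivity breaks. You also supply the linearity verifications, which the paper states but does not prove.
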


\begin{proof}
Transitivity fails when a type III adds to $\DI_{\text{pr}}^{1,1}$; this can change the primitive $\gr^W_2$ from (say) $(1,0,1)$ to $(1,1,1)$, permitting a type I in a subsequent degeneration.  But the type I cannot be done together with the type III as a single degeneration.

To quantify this, use \eqref{e22} and write $\DI_{a,b}$ as in Example \ref{Ex5}.  If $h^1\leq 2h^0-2$, then for $h^1$ even $\DI_{0,\frac{h^1}{2}}\preceq \DI_{\frac{h^1}{2},0}\preceq \DI_{1+\frac{h^1}{2},0}$ and for $h^1$ odd $\DI_{1,\frac{h^1-1}{2}}\preceq \DI_{1+\frac{h^1-1}{2},0}\preceq \DI_{2+\frac{h^1-1}{2},0}$, with transitivity failing in both.

For the converse, assume $\{\Diamond_i\}_{i=0}^2$ are admissible, with
$\Diamond_2 \preceq \Diamond_1$ and $\Diamond_1\preceq
\Diamond_0$, and $h^1\geq 2h^0-1$. By Proposition \ref{P10}, we have $a_2\leq a_1\leq a_0$, $a_2+b_2\leq a_1+b_1\leq a_0+b_0$, $a_1+b_1\leq h^1-b_2$, and $a_0+b_0 \leq h^1-b_1$.  To show $\Diamond_2 \preceq \Diamond_0$, it remains to check that $a_0+b_0\leq h^1-b_2$.  Since $h^1\geq a_i+2b_i$ and $h^0\geq a_i+b_i$ for each $i$ (admissibility), we have $h^1\geq 2h^0-1 \geq a_0+b_0 +a_2+b_2 -1$ hence $h^1-b_2 \geq a_0+b_0+a_2-1$.  

If $a_2\geq 1$ or the last inequality is strict, then $h^1-b_2\geq a_0+b_0$ and we are done.  Suppose then that $a_2=0$ and $h^1-b_2=a_0+b_0-1$.  Then in $h^1\geq 2h^0-1\geq h^0+b_2-1\geq a_0+b_0+b_2-1$ all inequalities are equalities, so that $b_2=h^0=a_0+b_0$.  By admissibility $h^1\geq 2b_2$, so
$h^1-b_2 \geq b_2=a_0+b_0$.  This contradiction completes the proof.
\end{proof}

\begin{rem}
Consistently with its correspondence to closure order in Example \ref{Ex5}, $\R(2,37,2)$ is a poset.  Its graph looks like a $3\times 3$ part of a triangular tiling.	
\end{rem}

\subsection{Degenerations of weight 3 Hodge structures}

When $r=3$, with $\uh=(h^0,h^1,h^1,h^0)$, the relations graph $\R_0(\uh)=\R^{\circ}_0(\uh)$ is a nontrivial quotient of $\R(\uh)$.  The Hodge diamonds are determined by
$$\DI^{0,0}=\DI^{3,3}=a,\;\;\DI^{1,0}=\DI^{0,1}=\DI^{2,3}=\DI^{3,2}=b,$$
$$\DI^{2,0}=\DI^{0,2}=\DI^{1,3}=\DI^{3,1}=c,\;\;\text{and}\;\DI^{1,1}=\DI^{2,2}=d+a$$
with $a,b,c,d\geq 0$, $a+b+c\leq h^0$, and $a+b+c+d\leq h^1$, so that $\DI^{1,2}=\DI^{2,1}=h^1-a-b-c-d$ and $\DI^{3,0}=\DI^{0,3}=h^0-a-b-c$.  

\begin{exa}[{\cite[Ex. 5.22]{KPR}}]
Here is how things look in the special (Calabi-Yau) case where $h^0=1$.  Instead of $\DI$, write $\I_d,\II_d,\III_d,\IV_d$ for diamonds, where $d\leq h^1$ and
\begin{itemize}[leftmargin=0.5cm]
\item ``$\I$'' means that $a=b=c=0$;
\item ``$\II$'' means that $a=b=0$ and $c=1$;
\item ``$\III$'' means that $a=c=0$ and $b=1$; and
\item ``$\IV$'' means that $b=c=0$ and $a=1$.	
\end{itemize}
(The indexing of type $\IV$ is different than in [loc.~cit.]; clearly $d<h^1$ for $\II,\III,\IV$.) Then the polarized relation is completely described by:
$$\mathrm{K}_{d_1}\preceq \mathrm{K}_{d_0}\;\iff\;d_1\leq d_0\;\;\;\;(\mathrm{K}=\I,\II,\III,\IV),\;\;\;\;\;\;\;\;\III_{d_1}\preceq \IV_{d_0}\iff d_1+1\leq d_0,$$
$$\II_{d_1}\preceq \IV_{d_0}\iff 1\leq d_1\leq d_0,\;\;\;\;\;\;\II_d\preceq \III_{d_0}\iff 2\leq d_1\leq d_0+2,$$
$$\I_{d_1}\preceq \III_{d_0}\iff d_1\leq d_0,h^1-2,\;\;\;\;\;\text{and}\;\;\;\;\;\;\I_{d_1}\preceq \II_{d_0},\IV_{d_0}\iff d_1\leq d_0,h^1-1.$$
This is not transitive since $\II_0\preceq \II_1$ and $\II_1\preceq \IV_1$, but $\II_0\not\preceq \IV_1$.

The \emph{weak} relation, on the other hand, is obtained from the full polarized relation by ignoring subscripts.  If $h^1\geq 3$ it is given simply by $$\I\preceq \II,\III,\IV,\;\;\;\;\;\II\preceq \III,\IV,\;\;\;\text{and}\;\;\;\;\III\preceq \IV,$$ which is obviously a poset with linear order.  If $h^1= 2$ then $\II \not\preceq \III$; while if $h^1=1$ then $\III$ doesn't occur and $\II\not\preceq \IV$.  These observations are generalized in \S\ref{S2.3}.
\end{exa}

Returning to the general case, we used SageMath to compute the convex hull of all polarized relations in $(a_0,b_0,c_0,d_0,a_1,b_1,c_1,d_1)$-space for each $\uh$ with $h^0\leq 6$ and $h^1\leq 18$.  This appears to stabilize\footnote{Indeed, all 15 inequalities already appear if we consider the convex hull of the relations in $(a_0,b_0,c_0,d_0,a_1,b_1,c_1,d_1,h^0,h^1)$-space for $h^0\leq 1$ and $h^1\leq 3$.} and to be described by the following system of 15 inequalities:
\[\begin{array}{lll} a_0-a_1\geq 0 &
a_0-a_1+b_0-b_1\geq 0 &
a_0-a_1+b_0-b_1+c_0-c_1 \geq 0 \\ 
2c_1+d_0-d_1\geq 0 & 
b_0-b_1+c_0-c_1+d_0\geq 0 &
a_0-a_1+2(b_0-b_1)+d_0-d_1\geq 0 \\
2b_0-b_1+d_0-d_1\geq 0 &
b_0-b_1+c_1+d_0-d_1\geq 0 & 
2a_0+4b_0+2c_0+d_0+d_1\leq 2h^1\\ 
2(c_0-c_1)+d_0-d_1\geq 0 &
a_1+2b_0+c_1+d_0\leq h^1 & 
a_0-a_1+2(c_0-c_1)+d_1\geq 0 \\
c_0-c_1+d_1\geq 0 & 
2(b_0-b_1)+2d_0-d_1\geq 0 & 
a_0+b_0+b_1+c_0+d_1\leq h^1.
\end{array}\]
That is, satisfying these inequalities is a necessary condition to have $\DI_1\preceq \DI_0$ (in $\R(\uh)$).  It also appears to be a sufficient condition to have $\widetilde{\DI}_1\preceq \widetilde{\DI}_0$ (in $\R_0^{\circ}(\uh)$), but unfortunately not for $\DI_1\preceq \DI_0$ (which is somewhat mystifying).  Again, the statements in this paragraph have only been experimentally verified.

At least for $\R_0^{\circ}(\uh)$, the situation improves dramatically as soon as $h^1\geq 3h^0$.  As we shall see in the next subsection, the first three inequalities alone will then completely describe the weak polarized relations.

\subsection{Saturation for weak polarized relations graphs}\label{S2.3}

We conclude with some results of a more general flavor for arbitrary weight.  One easy observation (related to double suspensions of singularities) is that if $\uh=(h^0,\uht,h^0)$ (in weight $r$) for some $\uht$ (in weight $r-2$), then \emph{$\R_{k-1}^{\circ}(\uht)$ is the same graph as $\R^{\circ}_{k}(\uh)$}.  So it suffices to study $\R_0^{\circ}(\uh)$.

Working in weight $r$, assume that $h^0>0$.  Each vertex of $\R^{\circ}_0(\uh)$ corresponds to the $\gr_F^0$ part $\widetilde{\DI}$ of an admissible degeneration $\DI$ for the Hodge numbers $\uh$.  Let $\A^r(m)$ be the set of functions $\ua\colon \{0,1,\ldots,r\}\to \ZZ_{\geq 0}$ with $\sum_{k=0}^r \ua(k)=m$, and write $\uA(\ell):=\sum_{k=0}^{\ell} \ua(k)$.  Note that $\A^r(m)$ has a natural poset (hence directed graph) structure given by
$$\ua_1 \sqsubseteq \ua_0 \iff \uA_1(\ell)\leq \uA_0(\ell)\;\;(\forall \ell).$$
The process of taking LMHS does not change the ranks of the $\{\gr_F^i\}$, and cannot decrease the ranks of the $\{\gr_F^0\cap W_j\}$.  Thus the vertices of $\R^{\circ}_0(\uh)$ are a subset of $\A^r(h^0)$, and its arrows are a subset of those induced by $\sqsubseteq$.

\begin{defn}
$\R^{\circ}_0(\uh)$ is \emph{maximal} if its vertex set equals $\A^r(h^0)$, and \emph{saturated} if it is maximal and its edge set equals $\sqsubseteq$.
\end{defn}

Define a function $\ub^r$ on $\{0,1,\ldots,r\}$ by $\ub^r(i)=2i+1=\ub^r(r-i)$ for $i<\tfrac{r}{2}$, and $\ub^r(\tfrac{r}{2})=r$ for $r$ even. So for instance $\ub^5=(1,3,5,5,3,1)$ and $\ub^6=(1,3,5,6,5,3,1)$.  Also define $\uc^r$ by $\uc^r(0)=\uc^r(r)=1$ and $\uc^r(i)=2$ for $0<i<r$.  For functions $\underline{f}$ and $\underline{g}$ on $\{0,1,\ldots,r\}$, we write $\underline{f}\geq \underline{g}$ iff $\underline{f}(i)\geq \underline{g}(i)$ ($\forall i$) and denote scalar multiplication (of the entire function) by $e.\underline{f}$.

\begin{prop}\label{p19}
$\R^{\circ}_0(\uh)$ is \textup{(i)} maximal iff $\uh\geq h^0.\uc^r$ and \textup{(ii)} saturated iff $\uh\geq h^0.\ub^r$.
\end{prop}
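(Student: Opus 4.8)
The plan is to prove both directions of each equivalence by combining the explicit combinatorial description of the polarized relation from \cite{KPR} (recalled at the start of \S\ref{S2}) with the structure of $\A^r(h^0)$ as a poset. The key observation already in place is that the vertices of $\R^{\circ}_0(\uh)$ form a subset of $\A^r(h^0)$ via the assignment $\widetilde{\DI}\mapsto \ua$, where $\ua(j)=\mathrm{rk}\,\gr_F^0\gr^W_j$ of the LMHS, and the edges are a subset of those induced by $\sqsubseteq$. So for maximality I must show every $\ua\in\A^r(h^0)$ is realized by \emph{some} admissible Hodge diamond $\DI$ precisely when $\uh\geq h^0.\uc^r$; and for saturation I must show that, assuming maximality, \emph{every} relation $\ua_1\sqsubseteq\ua_0$ is realized by a polarized relation $\DI_1\preceq\DI_0$ precisely when $\uh\geq h^0.\ub^r$.

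For part (i), the forward direction ``$\Rightarrow$'' is the easy bookkeeping: the constraint $\widetilde{\DI}^{0,j}\le \widetilde{\DI}^{1,j+1}\le\cdots$ forces, at the extreme, that a single $\gr_F^0$-class in weight $j$ needs a full ``staircase'' of Hodge-Deligne room, and summing the worst case over all $h^0$ classes placed to maximize the requirement in a fixed bidegree $(p,q)$ with $p+q=w$ yields the lower bound $h^p\ge h^0\cdot\uc^r(p)$ (the $\uc^r$ being $1$ at the ends because a weight-$0$ or weight-$2p$ class on a $\gr_F^0$-row imposes only a one-sided constraint, and $2$ in the interior where both the $F$-staircase going up and the conjugate-symmetry constraint bite). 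For ``$\Leftarrow$'', given $\uh\ge h^0.\uc^r$ and any target $\ua\in\A^r(h^0)$, I construct an admissible $\DI$ explicitly: place $\ua(j)$ primitive $\gr_F^0$-generators in weight $j$, complete each to the minimal admissible ``Hodge staircase'' supported in $[0,\mathrm{level}]$, and check the $\uc^r$ bound is exactly what is needed for $\DI^{p,q}\ge 0$ and $\DI^{p,q}\le\DI^{p+1,q+1}$ everywhere; the remaining ``mass'' of $\uh$ goes into purely-$(p,p)$ (Tate) pieces in the middle, which never violates admissibility.

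For part (ii), assume maximality. The inclusion ``edges $\subseteq\sqsubseteq$'' is already known, so saturation amounts to: given $\ua_1\sqsubseteq\ua_0$ in $\A^r(h^0)$, realize it by some $\DI_1\preceq\DI_0$. Using the \cite{KPR} recipe, $\DI_1\preceq\DI_0$ is built by choosing, for each primitive weight-$w$ piece $P_w$ of $\DI_1$ and each shift $a$, an admissible degeneration $\DI_{P_w}$; the effect on $\gr_F^0$ is that a primitive $\gr_F^0$-class sitting in weight $w$ of $\DI_1$ can be moved to any weight $w'$ with $w'\le w$ \emph{and} $w'\ge$ (some lower bound determined by the available Hodge room in $P_w$), i.e. a class can ``descend'' in weight but only so far as the primitive Hodge numbers of $\DI_1$ permit a long enough $N$-string. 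The move $\ua_1\sqsubseteq\ua_0$ means exactly: the $\gr_F^0$-mass of $\DI_1$ can be rearranged to the right (towards higher weight) to get $\ua_0$ — wait, orientation: ``more degenerate $=$ greater'', $\DI_0$ is the degeneration of $\DI_1$, so mass of $\ua_1$ moves \emph{down} in weight to produce $\ua_0$; this is precisely $\uA_1(\ell)\le\uA_0(\ell)$. So the content is: any such rearrangement can be realized with the primitive Hodge room afforded by \emph{some} starting diamond $\DI_1$ over $\ua_1$, and this is possible for \emph{all} $\ua_1\sqsubseteq\ua_0$ exactly when $\uh$ is large enough to let each primitive weight-$w$ piece carry an $N$-string of the maximal length $w$ (for $w\le r-w$) — which is the source of the ``$2i+1$'' in $\ub^r(i)$ (a length-$i$ string downward from weight $r-i$... accounting for $\gr_F^0$ plus its conjugate plus the staircase) and the value $r$ in the middle. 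The forward direction ``saturated $\Rightarrow\uh\ge h^0.\ub^r$'' is shown by exhibiting, whenever some $\ub^r(i)$ bound fails, a specific pair $\ua_1\sqsubseteq\ua_0$ (``move one class from weight $r-i$ down to weight $i$'') that cannot be realized because no admissible $\DI_1$ over $\ua_1$ has a long enough $N$-string in that weight — this is the weight-2 and weight-3 failures (``$\II\not\preceq\III$ when $h^1=2$'', etc.) scaled up.

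\medskip

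\emph{Main obstacle.} The hard part will be the ``$\Leftarrow$'' direction of saturation: verifying that the $\ub^r$ lower bound is \emph{exactly} enough to realize \emph{every} $\sqsubseteq$-relation simultaneously. The subtlety is that a single polarized relation $\DI_1\preceq\DI_0$ must move \emph{all} the $\gr_F^0$-mass at once, using primitive pieces $P_w$ of one fixed $\DI_1$, and these pieces compete for the finite Hodge budget $\uh$; one must choose $\DI_1$ (among all admissible diamonds over $\ua_1$) cleverly so that each weight-$w$ primitive block has enough room for the descent demanded by $\ua_1\sqsubseteq\ua_0$ in that block, while the blocks' budgets sum to $\le\uh$. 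I expect this to reduce to a flow/Hall-type argument: the needed descents decompose into ``elementary moves'' (shift one unit of $\gr_F^0$-mass down by one weight step), each elementary move needs one extra unit of primitive Hodge room at an interior bidegree, and $\uh\ge h^0.\ub^r$ is precisely the statement that the total room available (summed over all $h^0$ classes and all interior positions) dominates the total room any chain of elementary moves can demand — with the $\ub^r$ profile encoding the per-weight room budget. Making this packing argument clean, rather than a case analysis ballooning with $r$, is where the real work lies.
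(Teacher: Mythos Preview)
Your overall strategy is sound, and your treatment of part (i) via admissibility ``staircases'' is essentially equivalent to the paper's.  Where you diverge is in part (ii): the ``elementary move'' decomposition into \emph{single weight-steps} is not how the \cite{KPR} atomic degenerations work, and this miscalibration is what leads you to anticipate a Hall-type packing obstacle that does not in fact arise.

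In the paper's framework, an atomic degeneration selects a $\nabla$-string of length $\ell$ in the primitive weight-$k$ part of $\DI_1$ (a pure sub-HS with pieces $(0,k),(1,k-1),\ldots,(\ell,k-\ell)$) and collapses it to an $N$-string of the same length and center; on $\gr_F^0$ this moves one unit of mass from weight $k$ directly to weight $k-\ell$.  The cost is that $\DI_1$ must contain the entire ``block'' comprising this $\nabla$-string, its complex conjugate, and their $N$-shifts through weights $k+2,k+4,\ldots,2r-k$.  Taking $\gr_F$-ranks, the per-unit bound is maximized at $\ell=k-1$ (primitive part of type $\uc^k$); for $k=r$ one gets $\uc^r$ (governing (i)), and maximizing the sum of $r-k+1$ shifts of $\uc^k$ over $k$ gives exactly $\ub^r$ at $k=\lfloor r/2\rfloor$ or $\lceil r/2\rceil$ (governing (ii)).

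With this per-unit bound, sufficiency is immediate rather than a flow argument.  Given $\ua_1\sqsubseteq\ua_0$, pick any transport plan sending each of the $h^0$ units of $\ua_1$ to its destination in $\ua_0$, and let $\DI_1$ be the \emph{direct sum} of the $h^0$ corresponding blocks, padded by pure weight-$r$ pieces with $h^{0,r}=0$ to absorb the slack in $h^1,\ldots,h^{r-1}$.  The blocks are independent summands, so there is no competition; their total $\gr_F$-rank is $\leq h^0.\ub^r\leq\uh$.  Necessity comes from single extremal tests: realizing the vertex $\ua=h^0.\underline{e}^1$ forces $\uh\geq h^0.\uc^r$, and realizing the edge from $\ua_1=h^0.\underline{e}^{\lfloor r/2\rfloor}$ to $\ua_0=h^0.\underline{e}^1$ forces $\uh\geq h^0.\ub^r$, since \emph{any} admissible $\DI_1$ over that $\ua_1$ which degenerates to that $\ua_0$ must contain $h^0$ disjoint copies of the maximal block.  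So your anticipated ``main obstacle'' dissolves once the atomic moves are correctly identified as $\nabla$-string collapses of arbitrary length rather than unit weight-shifts.
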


\begin{proof}
We need the following language:  recall that an \emph{$N$-string} of length $\ell$ starting at $(p,q)$ is a $\CC$-MHS with rank-1 pieces of types $(p,q),(p+1,q+1),\ldots,(p+\ell,q+\ell)$.  Similarly, a \emph{$\nabla$-string} of length $\ell$ starting at $(p,q)$ is a pure $\CC$-HS with rank-1 pieces of types $(p,q),(p+1,q-1),\ldots,(p+\ell,q-\ell)$.\footnote{The terminology reflects the operation of $\overline{\nabla}_{t\frac{d}{dt}}$ for a VHS, and of the monodromy logarithm $N=-2\pi\ay\mathrm{Res}_0(\nabla)$ in the limit.  The strings can be thought of as irreducible representations of $\mathrm{SL}_2$.}  We can also speak of strings ``centered at'' a midpoint $(p,q)$, where $p$ and $q$ are integers for $\ell$ even and half-integers for $\ell$ odd.

Since all MHSs for us are LMHSs, there is an action of $N$ whose $j^{\text{th}}$ powers produce isomorphisms between $\gr^W_{r+j}$ and $\gr^W_{r-j}$.  By \cite[Thm.~5.18]{KPR}, the allowable degenerations are sums of the following ``atomic'' ones.  First, we select a $\nabla$-string in an $N$-primitive part of weight $w\leq r$, centered at $(p,q)$.  The degeneration will replace it by an $N$-string of the same length with the same center.  However, we must \emph{simultaneously} degenerate $\nabla$-strings of the same length centered at $(q,p)$, as well as $(p+1,q+1)$, $(q+1,p+1)$ etc.~up to $(p+r-w,q+r-w)$ and $(q+r-w,p+r-w)$.\footnote{These $\nabla$-strings are ones which are sent to the $\nabla$-strings in weight $w$ by powers of $N$.}

Here we are only \emph{a priori} interested in $\nabla$-strings which start at $(0,k)$ for some $k\in \{0,\ldots,r\}$.  If the string has length $\ell$, then the degeneration subtracts $1$ from this spot and adds $1$ to the $(0,k-\ell)$ spot.  Such a degeneration is allowable if and only if the primitive part in weight $k$ (of the MHS undergoing degeneration) contains the direct sum of the length-$\ell$ string \emph{and} its complex conjugate.  But then the MHS must contain a \emph{block} where this direct sum is repeated in weights $k+2,k+4,\ldots,2r-k$.  Taking ranks of $\gr_F^i$'s of this block produces a lower bound on $\uh$ which must be satisfied for the ``atomic'' degeneration to be possible.  

For each $k$, the largest such bound occurs when $\ell=k-1$, so that the sum of primitive strings in weight $k$ is $(1,2,2,\ldots,2,1)=\uc^r$.  If we are degenerating the initial pure HS then $k=r$ and this \emph{is} the bound.  For $k<r$, the bound is given by summing $r-k+1$ shifts of $\uc^k$, which is maximized by taking $k=\tfrac{r}{2}$ resp.~$\tfrac{r+1}{2}$ (for $r$ even resp.~odd), yielding $\ub^r$.  To see that the bounds of $h^0.\uc^r$ resp.~$h^0.\ub^r$ suffice in general, we simply take sums of atomic degenerations.  For necessity, writing $\underline{e}^i$ for ``standard basis functions'', we degenerate to $\ua=h^0.\underline{e}^1$ from $\ua=h^0.\underline{e}^r$ for (i) and $\ua=h^0.\underline{e}^{\lfloor\frac{r}{2}\rfloor}$ for (ii).
\end{proof}

\begin{rem}
While there are plenty of VHSs in algebraic geometry with Hodge numbers $\uh$ that \emph{do not} satisfy the bound in Prop.~\ref{p19}(ii), we want to point out that (at least) two particularly natural ones \emph{do}.  Let $\widetilde{U}_{n,d}\subset \PP H^0(\PP^n,\mathcal{O}(d))$ be the complement of the discriminant locus, with quotient $U_{n,d}$ by $\mathrm{PGL}_{n+1}$, and consider the VHSs:
\begin{itemize}[leftmargin=0.5cm]
\item $\mathcal{V}_{n,d}$ over $U_{n,d}$ given by the middle primitive cohomologies (weight $r=n-1$) of hypersurfaces of degree $d\geq n+1$; and
\item $\mathcal{V}'_{n,d}$ over $U_{n,2d}$ given by the middle primitive cohomologies (weight $r=n$) of double covers of $\PP^n$ branched along hypersurfaces of degree $2d$ (with $d\geq n+1$). 
\end{itemize}
(Note that the VHS in Example \ref{Ex5} is $\mathcal{V}'_{2,4}$.) In both cases, the Hodge numbers satisfy $\uh\geq h^0.\ub^r$.  In fact, something much stronger is true:  we have 
\begin{equation}\label{eIN}
h^1\geq rh^0
\end{equation}
in both cases.  Since the Hodge numbers are unimodal, this is enough.

To see this, note that for hypersurfaces [resp.~double covers] $h^p$ identifies with the coefficient of $t^{(p+1)d-n-1}$ [resp.~$t^{(2p+1)d-n-1}$] in $(1+t+t^2+\cdots+t^{d-2})^{n+1}$ [resp.~$(1+t+t^2+\cdots+t^{2d-2})^{n+1}$], see \cite[Cor.~17.5.4]{Ar}.  The unimodality follows immediately from hard Lefschetz applied to $H^*((\PP^{d-2})^{\times (n+1)})$ [resp.~$H^*((\PP^{2d-2})^{\times (n+1)})$].  For the inequality, we compute that $h^0=\binom{d-1}{n}$ (in both cases) and $h^1=\binom{2d-1}{n}-(n+1)\binom{d}{n}$ [resp.~$\binom{3d-1}{n}-(n+1)\binom{d}{n}$].  So in the hypersurface case, \eqref{eIN} reduces to the statement that
$$f_n(d):=\frac{h^1}{h^0}-(n-1)=\prod_{j=1}^n\left(1+\frac{d}{d-j}\right)-(n+1)\left(1+\frac{n}{d-n}\right)-n+1$$
is nonnegative for $d\geq n+1\geq 3$.  The double-cover case immediately follows from this.  

It is easy to check that $f_n(n+1)\geq 2^{n-1}(n+1)-n(n+3)>0$ for $n\geq 3$, and that $f_3(d),f_4(d)\geq 0$ for $d>3$.  Since $\lim_{d\to \infty}f_n(d)=2^n-2n\geq 0$, it now suffices to show $$\textstyle(x-n)^2f_n'(x)=-\left(\prod_{j=1}^n(1+\tfrac{x}{x-j})\right)\sum_{j=1}^n j\tfrac{(x-n)^2}{(x-j)(2x-j)}+n^2+n\leq 0$$
for $n\geq 5$ and $x\geq n+2$.  Indeed, the terms of the sum are increasing on $x\geq n+2$ and the product is $>2^n$; substituting $n+2$ in the sum and using $-2^n+n^2+n<0$ (for $n\geq 5$) gives the result.
\end{rem}

\end{document}